
\documentclass[conference]{asl}
\usepackage{cabal}
\usepackage{xypic, amssymb} 

\renewcommand{\phi}{\varphi}
\newcommand{\bfb}{{\mathbf b}}

\DeclareMathOperator{\gopen}{\Game_{\text{open-$\omega_1$}}\bfPi^1_1}
\DeclareMathOperator{\true}{\tt true} 
\DeclareMathOperator{\lRK}{\leq_{RK}}

\DeclareMathOperator{\dGuB}{\Gamma_{uB}}
\newcommand{\cU}{\mathcal U}
\newcommand{\cV}{\mathcal V}

\newcommand{\cO}{\mathcal O}

\newcommand{\cA}{\mathcal A}
\newcommand{\cT}{\mathcal T}
\newcommand{\cTdg}{\mathcal T_{\delta,\gamma}}
\newcommand{\cTdo}{\mathcal T_{\delta,\omega}}
\newcommand{\cTdd}{\mathcal T_{\delta,\delta}}
\newcommand{\CL}{\lang}
\newcommand{\CLdg}{\CL_{\delta,\gamma}}
\newcommand{\cW}{\mathcal W}
\newcommand{\cWdg}{\cW_{\delta,\gamma}}
\newcommand{\cWdo}{\cW_{\delta,\omega}}
\newcommand{\cWdd}{\cW_{\delta,\delta}}

\newcommand{\bbR}{\R}
\newcommand{\bbP}{\mathbb P}
\newcommand{\bbQ}{\mathbb Q}

\newcommand{\cB}{{\mathcal B}}   
\newcommand{\Bdg}{\cB_{\delta,\gamma}}
\newcommand{\cP}{{\mathcal P}}
\newcommand{\bfC}{{\mathbf C}}
\newcommand{\bbC}{{\mathbb C}}
\newcommand{\fM}{\mathcal  M}
\DeclareMathOperator{\mwM}{{\mathcal M}_{mw}}
\DeclareMathOperator{\Hom}{Hom}

\renewcommand{\>}{\rangle}

\newcommand{\x}{\vec x}

\theoremstyle{definition}

\newcommand{\lthen}{\rightarrow}

\newcounter{my_enumerate_counter}
\newcommand{\pushcounter}{\setcounter{my_enumerate_counter}{\value{enumi}}}
\newcommand{\popcounter}{\setcounter{enumi}{\value{my_enumerate_counter}}}

\DeclareMathOperator{\Ult}{Ult}
\DeclareMathOperator{\Coll}{Col}

\newcommand{\Coqq}[2]{\Coll(#1,#2)}
\def\rs{\restriction}

\newcommand{\forces}{\Vdash}

\DeclareMathOperator{\Int}{int}

\newcommand{\lbl}{\label}
\renewcommand{\land}{\wedge}

\newcommand{\Wdd}{{\mathcal W}_{\delta,\delta}}

\title{The extender algebra and $\Sigma^2_1$-absoluteness} 
\author{Ilijas Farah} 

\address{Department of Mathematics and Statistics\\
York University\\
Toronto, ON, Canada\\
and\\
Matemati\v cki Institut SANU\\ Kneza Mihaila 36\\ Belgrade, Serbia}

\email{ifarah@mathstat.yorku.ca}

\urladdr{http://www.math.yorku.ca/$\sim$ifarah}
\thanks{Partially supported by NSERC}


\date{December 14, 2005. Version of \today.} 
\begin{document} 
\begin{abstract} 
We present  a self-contained account  
of Woodin's extender algebra and its use
in proving absoluteness results, including a proof of the $\Sigma^2_1$-absoluteness theorem. 
We also include a proof that the existence of an inner model with Woodin limit of Woodin cardinals
implies the existence of divergent models of $\AD^+$. 
\end{abstract} 

\maketitle
This note provides an introduction to Woodin's 
 extender algebra and a proof (due to Steel and Woodin independently)  
of Woodin's $\Sigma^2_1$-absoluteness theorem, using the extender algebra, 
 from a large cardinal assumption. 
Unlike the published accounts of this proof, 
the present account 
 should be accessible to a set theorist familiar with forcing and basic 
large cardinals (\eg, \cite{Ku:Book}, \cite{Kana:Book}). 
In particular, no familiarity with the  inner model theory is required. 
A more comprehensive  account of the extender algebra can also be found 
in \cite[\S7.2]{steel06innermodel} and 
\cite[\S4]{neeman04longgames} and the reader is invited to consult these excellent sources for 
more information and other applications. 
A strengthening of the $\Sigma^2_1$ absoluteness in terms of determinacy 
of games of length~$\omega_1$ was proved by Neeman (\cite{Nee:Games})
also using the extender algebra. In \cite{Lar:Three},  the 
extender algebra was used to prove the consistency of Woodin's $\Omega$-conjecture. 

\subsection*{Organization of the paper} 
In \S\ref{S.Logics} we introduce infinitary propositional logic $\CL_{\delta,\gamma}$. 
In \S\ref{S.EE} we review basics of elementary embeddings, Woodin cardinals and extenders. 
The extender algebra is introduced in \S\ref{S.EA}, where we also prove that is powerfully $\delta$-cc 
and introduce iteration trees and the iteration game. A variety of genericity iteration theorems is 
proved in \S\ref{S.GI} and their applications to absoluteness are proved in \S\ref{S.Ab}. 
Divergent models of $\AD^+$ are constructed in \S\ref{S.Div},
and \S\ref{S.Sigma22} has no real content.

\subsection*{Acknowledgments} 
Over a period of six years this paper evolved from a note not intended for publication 
to its present form. This subsection is appropriately long for a paper that took six years to write.  
My interest in the extender algebra was initiated by a conversation 
with John Steel in Oberwolfach in 2005 and motivated by 
(still open) Question~\ref{Q.Sigma-2-2}. I am indebted to   
 David Asper\'o,
Phillip Doebler,
Richard Ketchersid,  
Paul B. Larson,
Ernest Schimmerling,   
Ralf Schindler, 
John Steel, 
W. Hugh Woodin, 
Martin Zeman and 
Stuart Zoble, and especially 
Menachem Magidor and Grigor Sargsyan,  
for 
  explaining me the concepts presented below and pointing to flaws in  the earlier versions
 of this note. I would also like to thank the anonymous referee for a most detailed and useful 
 report.  I am  
 indebted to Grigor Sargsyan for convincing me that the construction of divergent 
 models of $\AD^+$ was within $\epsilon$ of the content of the present paper
  as of January 2011.\footnote{It wasn't.} 
  
I would like to thank John Steel and Hugh Woodin for their kind permission to include their
results in this note and to  Joan Bagaria and John Steel for their encouragement to
 publish this paper.
Part of this paper  
  was prepared during my stay at the Mittag--Leffler Institute in 
September 2009. I would like to thank the staff of the Institute for providing a pleasant and
stimulating atmosphere. 

The definition of the extender algebra and every uncredited result presented here 
is due to W.H. Woodin, but some of the proofs and formulations, as well as the exposition, 
 are mine. Mistakes, obscurities, and dwelling on trivialities are also all mine. 

\subsection*{Notation and terminology} Our notation is mostly standard but inner model theorist may want to take note
of two exceptions. First, $\pi$  usually denotes a transitive collapse instead of its inverse. 
Second, to an ultrafilter~$\cU$ we associate   quantifier $(\cU x)$ so that $(\cU x)\phi(x)$ 
stands for `the set of $x$ satisfying $\phi(x)$ belongs to $\cU$.'

All forcing considered here is set forcing. 

%
%

\section{An infinitary propositional logic} The extender algebra is the Lindenbaum algebra of certain theory in an infinitary propositional logic. 
The logic is described here and the theory will be described in \S\ref{S.EA}. 

\label{S.Logics}
\subsection{Logic $\CL_{\delta,\gamma}$} 
For regular cardinals $\gamma\leq \delta$ we shall define the 
infinitary propositional logic  $\CL_{\delta,\gamma}$. 
The interesting cases are $\gamma=\omega$ and $\gamma=\delta$, 
but it will be easier to develop the basic theory in the two cases parallelly. 
Let $\CLdg$   be the propositional logic with 
$\gamma$ variables~$a_\xi$, for $\xi< \gamma$, which in addition to the standard propositional 
connectives $\vee, \wedge, \rightarrow, \leftrightarrow$, and~$\lnot$ allows 
 infinitary conjunctions of the form   $\bigwedge_{\xi<\kappa} \varphi_\xi$ and 
 infinitary disjunctions of the form $\bigvee_{\xi<\kappa} \varphi_\xi$ 
for all $\kappa<\delta$.

In addition to the standard 
axioms and rules of inference for the finitary 
propositional logic, for each $\kappa< \delta$  and formulas $\varphi_\xi$, for $\xi<\kappa$
 the logic $\CLdg$ has  axioms $\vdash\bigvee_{\x<\kappa}\lnot
 \varphi_\xi\leftrightarrow\lnot\bigwedge_{\xi<\kappa}\varphi_\xi$ and 
 $\vdash \bigwedge_{\xi<\kappa} \varphi_\xi\rightarrow \varphi_\eta$, for every $\eta<\kappa$, 
 as well as  the  
 infinitary rule of inference: 
from $\vdash \varphi_\xi$ for all $\xi<\kappa$ infer $\vdash \bigwedge_{\xi<\kappa}\varphi_\xi$.
The provability relation for $\CLdg$ will be denoted by $\vdash_{\delta,\gamma}$
or simply $\vdash$ if $\delta$ and $\gamma$ are clear from the context. 
 Each proof in $\CL_{\delta,\gamma}$ is a well-founded tree and the assertion that~$\varphi$ is provable
in $\CL_{\delta,\gamma}$ is upwards  absolute between transitive models of ZFC. 
Since adding a new bounded subset of $\delta$ adds new formulas and new proofs to 
$\CL_{\delta,\gamma}$, it is not obvious that the assertion that $\varphi$ is not provable in $\CL_{\delta,\gamma}$ 
is upwards absolute between transitive models of ZFC. This is,  nevertheless, true: see Lemma~\ref{L1}.

Every $x\in \cP(\gamma)$ naturally defines a
 model  for $\CL_{\delta,\gamma}$ via $v_x(a_\xi)=\true$ iff $x(\xi)=1$ for $\xi<\gamma$. 
Define $A_\varphi=A_{\varphi,\delta,\gamma}$ via 
$$
A_\varphi=\set{x\in \cP(\gamma)}{ x\models \varphi}. 
$$
When $\gamma=\omega$ then these are the 
so-called \markdef{$\infty$-Borel} sets (see \eg, \cite[\S9.1]{Wo:Pmax2}).  Note that the sets of 
the form $A_{\varphi}$ for $\varphi\in \CL_{\omega_1,\omega}$ 
are exactly the Borel sets.\footnote{Keep in mind that, in spite of the connection 
with  Borel sets, this logic is  \markdef{not} the `usual'
 $L_{\omega_1,\omega}$ (\cite{Ke:Model}). Our $\CL_{\omega_1,\omega}$ 
happens  to be the propositional fragment of the latter. This is really an accident since 
in each of the two notations  `$\omega$' signifies
 a different constraint.}
 Also note that $x\models \varphi$ is absolute between transitive  models of ZFC
containing $x$ and~$\varphi$.

\subsection{Completeness of $\CL_{\delta,\gamma}$} 
The following two lemmas are standard. 

\begin{lem} 
\begin{enumerate}
\item $\vdash \varphi$ implies $\models \varphi$
for every formula $\varphi$ in $\CL_{\delta,\gamma}$. 
\item $\delta>2^\gamma$ implies that $\models\varphi$ but not $\vdash \varphi$
for some formula $\varphi$ in $\CL_{\delta,\gamma}$. 
\end{enumerate}
\end{lem} 

\begin{proof} Clause (1) can be proved by recursion on the rank of the proof. 

(2) For $x\subseteq \gamma$ let $\varphi_x=\bigvee_{\xi<\gamma} a_\xi^x$, 
where $a_\xi^x=a_\xi$ if $\xi\in x$ and $a_\xi^x=\lnot a_\xi$ if $\xi\notin x$. 
Then $y\models \varphi_x$ if and only if $y=x$, and therefore 
the formula $\varphi=\bigwedge_{x\subseteq \gamma} \lnot \varphi_x$
is not satisfiable. However, $\varphi$ is satisfiable 
in every forcing extension in which there exists a new 
subset of $\gamma$. 
\end{proof}

\begin{lem} \lbl{L1} For every $\varphi$ in $\CL_{\delta,\gamma}$ the following are equivalent. 
\begin{enumerate}
\item \label{L1.1} $\vdash \varphi$. 
\item \label{L1.2} $A_\varphi=\cP(\gamma)$ in all generic extensions. 
\item \label{L1.3} $A_\varphi=\cP(\gamma)$ in the extension by $\Coll(\omega,\kappa)$ for a large enough~$\kappa$. 
\end{enumerate}
\end{lem} 

\begin{proof} Clause \eqref{L1.1} is upwards absolute and by recursion on the rank of the proof it
easily implies \eqref{L1.2}. Also, \eqref{L1.2}   trivially implies \eqref{L1.3}.

Assume \eqref{L1.1} fails for $\varphi$ and let $\kappa$ be the cardinality of the set 
of  all  subformulas of $\varphi$.  
We may assume $a_\xi$ is a subformula 
of $\varphi$ if and only if $\xi<\kappa$. 

We claim that 
for any two formulas $\psi_1$ and $\psi_2$  in $\CL_{\delta,\gamma}$ 
such that $\psi_1\not\vdash \varphi$ we have either 
$\psi_1\land \psi_2\not\vdash \varphi$ or $\psi_1\land \lnot\psi_2\not\vdash \varphi$. 
Otherwise we have $\psi_1\vdash \psi_2\rightarrow \varphi$
and $\psi_1\vdash \lnot \psi_2\rightarrow \varphi$ and therefore $\psi_1\vdash \varphi$, a contradiction. 

In the extension by $\Coll(\omega,\kappa)$ enumerate all 
subformulas of $\varphi$ as $\psi_n$, for $n\in \omega$, and also enumerate
$\set{a_\xi}{ \xi<\kappa}$ as $b_n$, for $n\in\omega$. 
Recursively pick an increasing sequence of  
  $\CL_{\delta,\gamma}$-theories $\cT_n$, for $n\in\omega$, 
satisfying the following requirements: (i) $\cT_0=\{\lnot\varphi\}$ and each $\cT_n$ is a
 finite consistent set of ground-model formulas in $\CL_{\delta,\gamma}$. 
(ii) either  $b_n\in \cT_n$ or $\lnot b_n\in \cT_n$. 
(iii) if $\psi_n$ is of the form $\bigwedge_{\xi<\lambda} \sigma_\xi$ for some $\lambda$
then either $\psi_n\in \cT_n$ or $\lnot\sigma_\xi\in \cT_n$ for some $\xi<\lambda$. 
(iv) if $\psi_n$ is of the form $\bigvee_{\xi<\lambda} \sigma_\xi$ for some $\lambda$ then 
either $\lnot \psi_n\in \cT_n$ or $\sigma_\xi\in \cT_n$ for some $\xi<\lambda$. 

By the above, if $\cT_n$ satisfies the requirements then 
 $\cT_{n+1}$ as required can be chosen. 
After all $\cT_n$ have been chosen define $x\subseteq \gamma$ 
by $x=\set{\xi}{ a_\xi\in \bigcup_n \cT_n}$. 
Then $x\models \lnot\varphi$ can be proved by recursion on the rank of $\varphi$, 
using the fact that every infinitary conjunction and every infinitary disjunction 
appearing in  $\varphi$ is computed correctly. 
Therefore  in this extension \eqref{L1.3} fails. 
\end{proof} 


Recall that the \markdef{Lindenbaum algebra} of  a $\CL_{\delta,\gamma}$-theory $\cT$
is the Boolean algebra of all of equivalence classes of formulas in 
$\CL_{\delta,\gamma}$ with respect to the equivalence 
relation~$\sim_{\cT}$ defined by $\varphi\sim_{\cT}\psi$ if and 
only if $\cT\vdash \varphi\leftrightarrow \psi$. 
We shall denote this algebra 
by  $\Bdg/\cT$ and 
consider its positive elements  as a forcing 
notion (see Lemma~\ref{L3} below). 

\begin{lem}\lbl{L2} For every $\CLdg$-theory $\cT$
 such that  $\Bdg/\cT$ has the $\delta$-chain condition  
$\Bdg/\cT$ is a complete Boolean algebra. 
\end{lem}

\begin{proof} Immediate, since both $\Bdg$ and $\cT$ are $\delta$-complete. 
\end{proof} 

For $x\subseteq \gamma$ such that $x\models \cT$ 
define an ultrafilter of $\Bdg/\cT$ by (here $[\varphi]$ stands for the equivalence class of $\varphi$)
\[
\Gamma_x=\set{[\varphi]\in \Bdg/\cT}{ x\models \varphi}. 
\]
Note that for every generic $\Gamma\subseteq \Bdg/\cT$ there is 
the unique $x\subseteq \gamma$ such that $\Gamma_x=\Gamma$, 
defined as $x=\set{\xi}{ a_\xi\in \Gamma}$. 

\begin{lem} \lbl{L3} Assume $\cT$ is a theory in $\CLdg$ and
 $M$ is a transitive model of a large enough fragment of ZFC 
such that  $\{\delta,\gamma,\cT\}\subseteq M$, 
and $\Bdg/\cT$ has the $\delta$-chain condition in $M$. 
Then for every $x\in \cP(\gamma)$ we have  $x\models \cT^M$ if and only if 
$x$ is  $\Bdg/\cT$-generic over $M$. 
\end{lem}

\begin{proof} If $x$ is $\Bdg/\cT$-generic over $M$ then a proof by induction on the complexity  shows that  $x\models\varphi$ for all $[\varphi]\in \Gamma$, hence
$\set{\varphi\in\CLdg}{ x\models \varphi}$ includes $\cT^M$.

Now assume $x\models \cT^M$. 
Let $\varphi_\xi$ ($\xi<\kappa$) be a maximal antichain of $(\Bdg/\cT)^M$ 
that belongs to $M$.
By the $\delta$-chain condition $\kappa<\delta$ and therefore $\bigvee_{\xi<\kappa}\varphi_\xi$
is in $\CLdg^M$. By the maximality of the antichain we have $\cT^M\models\bigvee_{\xi<\kappa}\varphi_\xi$. 
Therefore $x\models \bigvee_{\xi<\kappa} \varphi_\xi$. This implies 
$x\models \varphi_\xi$ for some $\xi<\kappa$ and $\Gamma_x$ intersects the given maximal 
antichain. Since the antichain was arbitrary, we conclude $x$ is generic. 
\end{proof}


\section{Elementary embeddings} In the present section we recall some basic facts about elementary 
embeddings and extenders. 
\label{S.EE} 
\subsection{Extenders I} 
\lbl{SS1} 
We only sketch the bare minimum of the theory of extenders. 
For more details see \cite{martin94iterationtrees} or~\cite[\S26]{Kana:Book}.  
 An \markdef{extender} $E$ is a set that codes an elementary 
embedding $j_E\colon\V\to M$. 
For every elementary embedding $j\colon\V\to M$ such that 
with $\kappa=\crit(j)$ we have $\kappa<\lambda<j(\kappa)$ 
 there is an extender $E$ in $\V_{\lambda+1}$ such that $j_E$ and 
$j$ coincide up to $\V_\lambda$ in the sense 
that $j(A)\cap \V_\lambda=j_E(A)\cap \V_\lambda $ for all $A\subseteq \V_\kappa$. 
The model $M$ is constructed as a direct limit of ultrapowers of $V$ and it is 
denoted by $\Ult(\V,E)$. 

All  extenders $E$ used in this paper are such that  $j_E(\kappa)>\lambda$, where $\kappa=\crit(E)$ and $\lambda$ is the strength of $E$. Such extenders are called \markdef{short} extenders. Long extenders are needed to describe stronger large cardinal embeddings.

A \markdef{generator} of an elementary embedding $j\colon\V\to M$ is an ordinal $\xi$ such that
there are an inner model $N$ and elementary embeddings $i_1$ and $i_2$ 
such that the diagram 
\[
\diagram
V\rto^{i_1}\drto_j & N\dto^{i_2}\\
& M
\enddiagram 
\]
commutes and $\crit(i_2)=\xi$. 
For example, the critical point $\kappa$ is the least generator and a counting argument shows that 
 an elementary embedding such that
$j(\kappa)\geq (2^\kappa)^+$ must have other generators. 
A \markdef{generator} of an extender $E$ is a generator of $j_E$. 
The \markdef{strength} of  $E$ is the largest $\lambda$ such 
that $\V_\lambda\subseteq M$, where $j_E\colon\V\to M$.

The only properties of extenders used in the present paper are (E1) and~(E2). 
\begin{enumerate}
\item [(E1)] 
If $E$ is 
an extender with $\kappa=\crit(j_E)$ then $\Ult(N,E)$ can be formed for every model $N$ such that 
$(\V_{\kappa+1})^N=\V_{\kappa+1}$. Also, $\Ult(N,E)\supseteq \V_\lambda$, where $\lambda$ is 
the strength of $E$ and whether or not $\xi$ is a generator of~$E$ depends only on 
$E$ and $\V_{\kappa+1}$. 
\end{enumerate}
For a proof 
of (E1)  see \cite[Lemma~1.5]{martin94iterationtrees}, or note that this is 
immediate
from Definition~\ref{D.Ext}  below since all ultrafilters $E_s$ concentrate on $[\kappa]^{<\omega}$. 
While (E1) is a property of all extenders, (E2) below is not. However, we shall consider only the 
extenders satisfying (E2). 
\begin{enumerate}
\item [(E2)]  The strength of $E$ is greater than 
the supremum of the generators of~$E$. 
\end{enumerate}
It is important to note that the strength depends only on $E$,
and not on the model to which $E$ was applied.
In particular, if $E$ is an extender in $M$ with 
$\kappa=\crit(j_E)$, $\lambda$ is the strength of $E$,
  and $M\cap \V_{\kappa+1}=N\cap \V_{\kappa+1}$ then we have
$\Ult(M,E)\cap \V_\lambda=\Ult(N,E)\cap \V_\lambda$.

\subsection{Woodin cardinals} 
If $A$ is a set such that $j\colon\V\to M$ satisfies
$\V_\lambda=(\V_\lambda)^M$ and $j(A)\cap \V_\lambda=A\cap \V_\lambda$ then we say $j$ is an \markdef{$A,\lambda$-strong}
embedding. This may differ from the standard definition of a $\lambda$-strong embedding
but I will consider only $\lambda$ such that $\kappa+\lambda=\lambda$ for $\kappa=\crit(j)$, in 
which case there is no difference.   If $j_E$ is an $A,\lambda$-strong then we say $E$ is $A,\lambda$-strong. 
A cardinal $\delta$ is a \markdef{Woodin cardinal} if for every $A\subseteq \V_\delta$
there is $\kappa<\delta$ such that there are $A,\lambda$-strong elementary embeddings with critical 
point $\kappa$ for an arbitrarily large $\lambda<\delta$. We say that $A$ \markdef{reflects} to $\kappa$. 
Note that the Woodinness of~$\delta$ is witnessed by the extenders in $\V_\delta$, 
and therefore $\delta$ is Woodin in $V$ if and only if it is Woodin in $\LL(\V_\delta)$. 
Moreover, it suffices to consider only the extenders that satisfy property~(E2). 

A cardinal $\kappa$ is \markdef{$\lambda$-strong} if it is a critical point of an  $\emptyset,\lambda$-strong elementary embedding $j\colon\V\to M$.  In particular every Woodin cardinal $\delta$ is a 
limit of cardinals each of which is  $\lambda$-strong for all $\lambda<\delta$. 



\subsection{Extenders II}
A reader not  interested in  extenders per se  may  want to  skip the rest of this section
on the first 
reading and take (E1) and (E2) for granted. 
The actual definition of an extender  is, strictly speaking,  not necessary 
  for our present purpose. However,  this notion is central\footnote{after all, it is the \markdef{extender} algebra} in the theory and we  include it
for the reader's convenience. 
Every extender is of the form as described in Example~\ref{Ex.1}. 

\begin{exa} \label{Ex.1} 
Assume $j\colon\V\to M$ is an elementary embedding with $\crit(j)=\kappa$. 
Fix $\lambda$ such that $\kappa\leq \lambda<j(\kappa)$. 
Typically, we take $\lambda$ such that $M\supseteq \V_\lambda$. 
For $s\in [\lambda]^{<\omega}$ 
define $E_s\subseteq [\kappa]^m$ (where $m=|s|$) by  
\[
X\in E_s\text{ if and only if } s\in j(X). 
\]
Then $E(j,\lambda)=
\langle E_s\colon s\in [\lambda]^{<\omega}\rangle$ is a $(\kappa,\lambda)$-extender. 
Ordinal $\lambda$  is  called the \markdef{length} of the extender $E$ and is denoted 
by $\lh(E)$. In all of our applications $\lh(E)$ will  equal the strength of $E$. 
\end{exa}

The following facts can be found \eg, in \cite{steel06innermodel}. 
For an extender $E$ we write  
\[
\kappa_E=\crit(E)
\]
 (the \markdef{critical 
point} of $E$, \ie, the least ordinal moved by $j_E$) 
and 
\[
\lambda_E=\sup\set{\eta}{ \V_\eta\subseteq (\V_\eta)^M}
\]
(the \markdef{strength} of $E$). 
Note that if  $j\colon\V\to M$ is an elementary embedding such that $\crit(j)=\kappa$, 
 the strength of $j$ is $\lambda$, and $\lambda$ is a strong limit cardinal, 
 then the $(\kappa,\lambda)$ extender $E$ defined 
in Example~\ref{Ex.1} 
satisfies $\Ult(\V,E)\cap \V_\lambda=M\cap \V_\lambda$ 
and $j(A)\cap \V_\lambda=j_E(A)\cap \V_\lambda$
for all~$A$ and is therefore $\lambda$-strong.  
The assumption that $\lambda$ is strong limit is needed to code subsets of $\cP(\alpha)$ by sets 
of ordinals for every $\alpha<\lambda$.

Fix a cardinal $\kappa$. For $m\in \omega$ and $s\subseteq m$ with $|s|=n$ 
 consider the projection map 
$\pi=\pi_{m,s}\colon [\kappa]^m\to [\kappa]^s$ defined by 
\[
\pi(\langle \xi_i\colon i<m\rangle)=\langle \xi_i\colon i\in s\rangle.
\]
More generally, if $s\subseteq t$ are finite sets of ordinals 
(listed in the increasing order) 
then the projection $\pi=\pi_{t,s}\colon [\kappa]^t\to [\kappa]^s$
is defined by 
\[
\pi(\langle \xi_i\colon i\in t\rangle)=\langle \xi_i\colon i\in s\rangle.
\]
Recall that if $\cU$ and $\cV$ are ultrafilters on sets $I$ and $J$, respectively, 
then we write $\cU\lRK \cV$ if and only if there is $h\colon J\to I$ such that 
\[
X\in \cU\text{ if and only if } h^{-1}(X)\in \cV. 
\]
In this situation we say \markdef{$\cU$ is Rudin--Keisler reducible to $\cV$}
and that $h$ is the \markdef{Rudin--Keisler reduction of $\cU$ to $\cV$}. 

Respecting the notation commonly accepted in the theory of large cardinals, 
we denote 
an ultrapower of a structure $M$ associated to an  ultrafilter $\cU$ 
by $\Ult(M,\cU)$. Its elements are the equivalence classes of $f\in M^I\cap M$, 
\[
[f]_{\cU}=\set{g\in M^I\cap M}{ (\cU i) f(i)=g(i)}
\]
and the membership relation is defined by $[f]_{\cU} \in [g]_{\cU}$ if and only if
$(\cU i) f(i)\in g(i)$. 

If $\cU$ is $\aleph_1$-complete then $\Ult(M,\cU)$ is well-founded whenever $M$ is well-founded, 
and we identify $\Ult(M,\cU)$ with its transitive collapse. 

Assume $\cU$ and $\cV$ are ultrafilters on index-sets $I$ and $J$, respectively, 
and $\cU\lRK \cV$ is witnessed by a reduction $h\colon J\to I$. 
Then for any structure $M$ we can define a map $j_h\colon M^I/\cU  \to M^J/\cV$ by
\[
j_h([f]_{\cU})=[f\circ h]_{\cV}
\]
In the following lemma $M$ is any structure,   $\cU$ and $\cV$ 
are arbitrary ultrafilters,  and the proof is   straightforward.

\begin{lem} \label{L.RK} 
If $\cU\lRK \cV$,  $h$ is the Rudin--Keisler reduction, and $j_h$ is defined as above, 
then the diagram 
\[
\diagram
M \rto^{j_{\cU}}\drto_{j_{\cV}} & \Ult(M,\cU)\dto^{j_h}\\
& \Ult(M,\cV)
\enddiagram
\]
commutes and $j_h$ is an elementary embedding. \qed
\end{lem}

For a finite set $s$ and $i<|s|$ let $s_i$ denote its $i$-th element.  As common in 
set theory, we start counting at $0$. 

\begin{dfn} \label{D.Ext} 
Assume $\kappa<\lambda$ are uncountable cardinals. A \markdef{$(\kappa,\lambda)$-extender} is 
 $E\colon [\lambda]^{<\omega}\to \V_{\kappa+2}$ such that for all $s$ and $t$ in $[\lambda]^{<\omega}$ we have
\begin{enumerate}
\item [(a)] $E_s$ is a nonprincipal  $\kappa$-complete ultrafilter on $[\kappa]^{|s|}$.  
\item [(b)] If $s\subseteq t$ then $\pi_{t,s}$ is a Rudin--Keisler reduction of $E_s$ to $E_t$. 
\item [(c)] \markdef{Normality}: If $s\in [\lambda]^{<\omega}$, $i<|s|$,  and 
$f\colon [\kappa]^{|s|}\to \kappa$ is such that $f(u)<u_i$ 
for $E_s$ many~$u$, then 
there  exist $\xi<s_i$ and $j$ such that $f\circ \pi_{a\cup \{\xi\},a}(u)=u_j$
for $E_{s\cup \{\xi\}}$ many $u$. 

\item [(d)] \markdef{Countable completeness}: if  $s(n)\in[\lambda]^{<\omega}$ 
 and $X(n)\in E_{s(n)}$ for all $n<\omega$ then 
there is increasing $h\colon \bigcup_n s(n)\to\kappa$
such that $h''s(n)\in X(n)$ for all~$n$. 
\end{enumerate}
\end{dfn} 

Note that, with the above notation for a $(\kappa,\lambda)$-extender, 
we have $\kappa_E=\kappa$ but not necessarily $\lambda_E=\lambda$. 
However, 
the length and strength of each extender in all of our applications 
will coincide. 

 If $E$ is an extender, then the models 
\[
M_s=\Ult(\V,E_s)
\]
are, by the $\kappa$-completeness of $E_s$,  well-founded. 
By Lemma~\ref{L.RK}, they form a directed system under the embeddings 
(writing $j_{t,s}$ for $j_{\pi_{t,s}}$)
\[
j_{t,s}\colon M_t \to M_s. 
\]
The direct limit of this system will be denoted by $\Ult(\V,E)$ and identified with its transitive collapse
if it is well-founded. 

It is not difficult to show that if $E$  satisfies (a), (b) and (c) of Definition~\ref{D.Ext} 
then the corresponding ultrapower $\Ult(\V,E)$ is well-founded if and only if $E$ satisfies (d)
as well.

If $E$ is a $(\kappa,\lambda)$-extender, let $M$ be a model to which $E$ can be applied 
 and let  $\kappa<\xi\leq \lambda$. 
One defines $E\rs \xi=\langle E_s\colon s\in [\xi]^{<\omega}\rangle$. 
Like in Lemma~\ref{L.RK} 
 one can define an elementary embedding $i$ such that the diagram  
\[
\diagram
M\rrto^{j_{E\rs \xi}} \drrto_{j_E} && \Ult(M,E\rs \xi)\dto^i\\
&& \Ult(M,E)
\enddiagram
\]
commutes. If $\crit(i)=\xi$ then we say $\xi$ is a \markdef{generator} of $E$.

\begin{lem} If $j\colon\V\to M$ is a $\lambda$-strong embedding with $\crit(j)=\kappa$. 
Let $E=E(j,\lambda)$ be as in Example~\ref{Ex.1}. Then 
\begin{enumerate}
\item $E$ satisfies (E1) and (E2). 
\item For every $X\subseteq \kappa$ we have $j(X)\cap \lambda=j_E(X)\cap \lambda$. 
\end{enumerate}
In particular, if $\delta$ is a Woodin cardinal then there is a family $\vec E\subseteq \V_\delta$
of extenders satisfying (E1) and (E2) such that $\delta$ is Woodin in $\LL[\vec E]$. 
\end{lem} 

\begin{proof} 	Clause (E1) is a consequence of the
fact that $\Ult(N,E)$ depends only on $\V_{\kappa+1}\cap N$. 
Clearly every generator of a $(\kappa,\lambda)$-extender
is $\leq \lambda$, and therefore ~(E2) follows. 
\end{proof}

Lemma~\ref{L.E4} below  will not be needed elsewhere in the present paper.  
It is included only as
an illustration that 
the countable completeness of the extenders is a necessary requirement for 
wellfoundedness of the ultrapower. 

\begin{lem} \label{L.E4} Assume $\kappa$ is a  measurable cardinal. 
Then there is $\lambda>\kappa$ and $E\colon [\lambda]^{<\omega}\to \V_{\kappa+2}$ such that
\begin{enumerate}
\item [(a)] $E_s$ is a nonprincipal  $\kappa$-complete ultrafilter on $\kappa^s$,  
\item [(b)] If $s\subseteq t$ then $\pi_{t,s}$ is a Rudin--Keisler reduction of $E_s$ to $E_t$
\end{enumerate}
 such that the ultrapower  $\prod_E V$ is ill-founded. 
\end{lem} 

\begin{proof}  Let $j\colon\V\to M$ be an elementary embedding with $\crit(j)=\kappa$. 
Let $\lambda=\sup\set{n\in \omega}{j^n(\kappa)}$. 
For all $X$, $n$ and $k$ we have the following. 
\[
j^{n+k}(X)\cap j^k(\kappa)=j^k(j^n(X)\cap \kappa)=j^k(X\cap \kappa)=j^k(X)\cap j^k(\kappa). 
\]
Because of this for a finite $s\subseteq \lambda$ the following defines
 a subset $E_s$ of   $\kappa^{|s|}$: 
\[
X\in E_s\text{ if and only if } s\in j^n(X)
\]
where $n$ is  
such that  $\max(s)<j^n(\kappa)$. 
Then $E_s$ is clearly a $\kappa$-complete ultrafilter. 

The map $E$,  $[\lambda]^{<\omega}\ni s\mapsto E_s\in \V_{\kappa+1}$ satisfies
(E1) and (E2). We can therefore form the direct limit of the ultrapowers
$\prod_{E_s} V$, $s\in [\lambda]^{<\omega}$. 
However, this ultrapower is not well-founded. Let $s(n)=\set{j^i(\kappa)}{ i<n}$
and $M_n=\Ult(\V,E_{s(n)})$. Then $j_{s(n),s(n+1)}(\kappa)>\kappa$, and therefore 
in the direct limit we have a decreasing $\omega$-sequence of ordinals.  
\end{proof} 

The problem with $E$ defined in Lemma~\ref{L.E4} is that the
ultraproducts are iterated the `wrong way.'  Let us consider this
example a little more closely. If $s(n)=\set{j^m(\kappa)}{ m<n}$
then $E_{s(n)}$ is the set of all $X\subseteq \kappa^n$ such that
\[
(\cU \xi_0)(\cU \xi_1)\dots (\cU\xi_{n-1}) \langle \xi_{n-1},\dots,\xi_1, \xi_0\rangle\in X
\]
Then the set of all decreasing $n$-tuples of ordinals $<\kappa$
belongs to $ E_{s(n)}$  for each $n$.

\section{The extender algebra}\label{S.EA} 
Assume $\vec E$ is a family of extenders in $\V_{\delta+2}$. 
Typically,~$\vec E$ will be  a subset of $\V_\delta$ and it will  witness  that 
  $\delta$ is a Woodin cardinal. 
Let $\cTdg(\vec E)$ be the deductive closure in $\CL_{\delta,\gamma}$ of all sentences of the form
\begin{enumerate}
\item []$\Psi(\vec\varphi,\kappa,\lambda)$:\quad
$\bigvee_{\xi<\kappa} \varphi_\xi\leftrightarrow \bigvee_{\xi<\lambda}\varphi_\xi$
\end{enumerate} 
for a sequence $\vec\varphi=\langle\varphi_\xi\mid \xi<\delta\rangle$ 
in $\CL_{\delta,\gamma}$ 
such that $\varphi_\xi\in \V_\kappa$ for all $\xi<\kappa$, 
ordinals $\kappa<\lambda$, and  
extender $\vec E$  with $\crit(E)=\kappa$ that is $\vec \phi,\lambda$-strong.\footnote{Early versions of this paper contained a nonstandard definition of theory $\cTdg(\vec E)$.  
I~have decided to adopt the standard, more flexible,  definition.}

\begin{lem}\label{L.reflect+}
If $E\in \vec E$ and  $\crit(E)=\kappa$ then for 
every $f\colon \kappa\to \CL_{\delta,\gamma}\cap \V_\kappa$ we have
$
\CL_{\delta,\gamma}\vdash \bigvee_{\xi<\kappa} f(\xi)\lthen j_E (f)(\kappa)$. 
\end{lem} 

\begin{proof} If $\lambda$ is the strength of $E$ then with $\vec\varphi=\langle j_E (f)(\xi): \xi<\lambda)\rangle$ we have that $E$ is is $\vec\varphi,\lambda$-strong and $\Psi(\vec\varphi,\kappa,\lambda)$ implies the above formula. 
\end{proof} 


In the following it may be worth emphasizing that $x$ is assumed to 
belong to the same inner model as $\vec E$
(\compare{} Theorem~\ref{T1}).  

\begin{lem} For every real $x$ in $\LL[\vec E]$ 
we have $x\models \cT_{\delta,\omega}(\vec E)$. In 
particular,  $\cT_{\delta,\omega}(\vec E)$ is a consistent theory. 
\end{lem} 

\begin{proof} 
Fix a sequence $\vec\varphi$ that reflects to $\kappa$ and this is witnessed by 
extenders in $\vec E$. 
We need to check $\Psi(\vec\varphi,\kappa,\lambda)$ for all~$\lambda>\kappa$. 
We may assume $x\models\varphi_\xi$ for some $\xi$, since otherwise
$\Psi(\vec\varphi,\kappa,\lambda)$ vacuously 
holds for all $\lambda$. Pick an extender $E\in \vec E$
such that $j_E$ is $(\vec \varphi,\lambda)$-strong for some $\lambda>\xi$. 
Since $x$ is a real it is not moved by 
any~$j$ and therefore by elementarity we have $x\models \varphi_\eta$ for some 
$\eta<\kappa$ and the conclusion follows. 
\end{proof} 
 
 Note that if a cardinal 
  $\gamma$ is $\delta$-strong (or equivalently, in the terminology introduced above, 
if $\delta$ reflects to $\gamma$) and this is witnessed by extenders in~$\vec E$ then 
 $\gamma\not\models \cT_{\delta,\gamma}(\vec E)$, as can be seen by taking $\varphi_\xi(x)$ 
 to be $a_\xi$ if $\xi\in x$ and $\lnot a_\xi$ if $\xi\notin x$.  
 However, if $\crit(E)=\gamma$ then by the mininality of $\gamma$ and elementarity 
 we have 
 $\gamma\models j_E(\cT_{\delta\gamma}(\vec E))$

The \markdef{extender algebra} with $\gamma$ generators  corresponding to $\vec E$ is the algebra
\[
\cWdg(\vec E)=\Bdg/\cTdg(\vec E).
\]
Most important instances of $\cWdg(\vec E)$  are given by 
 $\gamma=\omega$ and~$\gamma=\delta$ but for convenience 
 we develop the theory of $\cWdg(\vec E)$
 for an arbitrary~$\gamma\leq \delta$.

\begin{lem} \lbl{L4} If $\delta$ is a Woodin cardinal and $\vec E$ is a system of extenders
witnessing its Woodinness, then  
$\cWdg$ 
 has the $\delta$-chain condition and is therefore complete.  
\end{lem}

\begin{proof} 
Pick a sequence $\vec\varphi=\set{\varphi_\xi}{ \xi<\delta}$ in $\CLdg$. 
We want to prove that 
$\set{[\varphi_\xi]}{ \xi<\delta}$ is not an antichain in $\cWdg$. 
The set 
\[
\bfC=\set{\kappa<\delta}{ (\forall \xi<\kappa) \varphi_\xi\in \V_\kappa}
\] 
is a club in $\delta$. Using some reasonable coding of pairs in~$\V_\delta$ by elements of~$\V_\delta$
find $X\subseteq \V_\delta$ that codes the pair $\bfC,\vec\varphi$. 
Since $\delta$ is Woodin  there exists an  $X,\lambda$-strong extender $E\in \vec E$ 
such that with $\kappa=\crit(E)$  the set  $[\kappa,\lambda)\cap \bfC$ is nonempty. 
By the elementarity of $j_E$  this implies  $\kappa\in \bfC$. 
Therefore Lemma~\ref{L.reflect+} applies, and $[\varphi_\kappa]$ is compatible 
with~$[\varphi_\xi]$ for some~$\xi<\kappa$. 
 
The completeness of $\cWdg$ now follows by Lemma~\ref{L2}.
\end{proof} 

A partial  converse of Lemma~\ref{L4}, 
 that if $\cWdg(\vec E)$ has $\delta$-cc and $\V_\delta^\#$ exists then $\delta$ is Woodin, was proved in 
\cite{KeZo:On} and independently by Woodin.  It is not known whether the converse of Lemma~\ref{L4} is true.

The following lemma, which will play an important role in  
\S\ref{S.Div}, was proved in \cite[Lemma~3.6]{Hj:Some} in the case when $\lambda=2$. 
The general case requires no new ideas. 

\begin{lem}[Hjorth] \label{L4+}
if $\lambda<\delta$ then product of 
$\lambda$ copies of $\cWdg$ has the $\delta$-chain condition. 
\end{lem}

\begin{proof} 
Assume the contrary and let $A=\set{\langle [\varphi_\xi(i)]}{ i<\lambda\rangle \mid \xi<\delta}$ be an antichain
in $(\cWdg)^\lambda$. By argument as in the proof of Lemma~\ref{L4} we can 
find $\kappa$ such that $\varphi_\xi(i)\in \V_\kappa$ for all $i$ and all $\xi<\kappa$
and there is an $A,\lambda$-strong extender $E$ with critical point $\kappa$ ($\lambda=\kappa+1$ 
suffices).

Then  $\cU=\set{X\subseteq \kappa}{ j_E(X)\ni \kappa}$
 is a normal $\kappa$-complete ultrafilter on $\kappa$. 
If there were $\xi<\eta<\kappa$ such that $[\varphi_\xi(i)]$ and $[\varphi_\eta(i)]$ 
are compatible for all $i<\lambda$, then $A$ would not be an antichain. 

Consider the map  $f\colon [\kappa]^2\to \lambda$ defined by 
\[
f(\{\xi,\eta\})=\min\set{i<\lambda}{[\varphi_\xi(i)] \text{ and $ [\varphi_\eta(i)]$ are incompatible}}. 
\]
Since $\cU$ is a normal ultrafilter we have $\cU\to (\cU)^2_\lambda$ 
and  there are $X\in \cU$ and $i<\lambda$ such that $f$ has constant value $i$ on $[X]^2$.   
Let 
\[
\psi_\xi=\begin{cases} 
\varphi_\xi(i),\text{ if $\xi\in X$ or $\xi\geq \kappa$}\\
\bot, \text{ if $\xi\notin X$ and $\xi<\kappa$}
\end{cases} 
\]
Then $\psi_\kappa= j(\vec\psi)_\kappa$.  \L os's theorem implies that $[\psi_\kappa]$ is 
incompatible with $[\psi_\xi]$ for all $\xi<\kappa$. However, 
$\psi_\kappa=j(\vec\psi)_\kappa$ and this  contradicts    Lemma~\ref{L.reflect+}. 
 \end{proof} 


The following curious fact  (not needed in  proofs of the main results of this note) 
was pointed out to me by Paul Larson.

\begin{lem} \label{L.universal} 
Assume 
$\gamma$ is any cardinal less than the least critical point of each extender in $\vec E$
and $\bbP$ is a forcing notion of cardinality $\gamma$. 
Then in~$\cWdg(\vec E)$ there is a condition $p$ that forces that 
$\bbP$ is a regular subordering of $\cWdg(\vec E)$.  
\end{lem}

\begin{proof}[The first proof of Lemma~\ref{L.universal}]
Pick a bijection between $\bbP$ and $\gamma$ and hence identify $\bbP$ 
with $\langle\gamma,\leq_{\bbP}\rangle$ for some partial ordering $\leq_{\bbP}$ on $\gamma$. 
   Let the  sentence $\varphi$ be the  conjunction of 
axioms expressing the following. 
\begin{enumerate}
\item [(a)] The order on $\bbP$: $a_\xi\lthen a_\eta$ whenever $\xi\leq_{\bbP}\eta$, 
\item [(b)] The incompatilibity relation on $\bbP$: 
$\bigwedge_{\zeta<\gamma} 
(\lnot (a_\zeta\lthen a_\eta)\lor \lnot (a_\zeta\lthen a_\xi))$, if $\xi\perp_{\bbP}\xi$.  
\item [(c)] For every maximal antichain $\cA$ of $\bbP$, $\bigvee_{\xi\in \cA} a_\xi$  .
\end{enumerate}
Then $\varphi$ is in $\CLdg$ and since its size is below the least critical point of an extender 
in $\vec E$, it is consistent with $\cT_{\delta\gamma}(\vec E)$. 
Therefore we may take $p$ to be (the equivalence class of) $\varphi$.
\end{proof} 

We shall give another proof of Lemma~\ref{L.universal} after  Theorem~\ref{T1.1.1}.  

\subsection{Iteration trees} 
We say that $T=(\zeta,\leq_T)$ is a \markdef{tree order} on an ordinal $\zeta$ if 
\begin{enumerate}
\item $T=(\zeta,\leq_T)$ is a tree with root $0$, 
\item $\leq_T$ is  coarser than $\leq$,  
\item every successor ordinal is a successor in $\leq_T$, 
\item\label{IT.4}  if $\xi\leq\zeta$
 is a limit ordinal then the set of $\leq_T$-predecessors of $\xi$ is cofinal 
in~$\xi$. 
\pushcounter
\end{enumerate} 
Consider a transitive model $M$ of a large enough fragment of ZFC and a system $\vec E$ of 
extenders in $M$. We allow $M$ to be a proper class. As a matter of fact, it is typically 
going to be a proper class. Nevertheless, we omit the (straightforward) 
nuisances involved in  
formalization of the notion of an iteration tree in ZFC.

An $\vec E$-\markdef{iteration tree} is a structure consisting of 
 $\langle T, M_\eta, E_\xi\mid \eta\leq\zeta, \xi<\zeta\rangle$, 
together with a commuting system of elementary embeddings $j_{\xi\eta}\colon M_\xi\to M_\eta$ for $\xi \leq_T \eta$ such that 
\begin{enumerate}
\popcounter
\item $\leq_T$ is a tree order on $\zeta$, 
\item $E_\xi$ is an extender in ${\vec E}^{M_\xi}$, 
\item\label{IT.7}  If $\kappa=\crit(E_\xi)$ then the 
immediate $\leq_T$-predecessor of $\xi+1$ is the least ordinal 
$\eta$ such that $M_\eta\cap \V_{\kappa+1}=M_\xi\cap \V_{\kappa+1}$, 
\item \label{IT.8} If $\xi+1$ is the immediate $\leq_T$ successor of $\eta$ 
then $M_{\xi+1}=\Ult(M_\eta,E_\xi)$, hence $j_{\eta,\xi+1}=j_{E_\xi}$ as computed with respect 
to $M_\eta$, 
\item \label{IT.8+} If $\xi$ is a limit ordinal then $M_\xi$ is the direct limit of 
$M_\eta$, for $\eta<_T \xi$, and
\item each $M_\xi$ is well-founded. 
\item \label{I.IT.11} If $\xi<\eta$ and $\zeta$ is the length of $E_\xi$ then 
$M_\xi\cap \V_\zeta= M_\eta \cap \V_\zeta$. 
\pushcounter
\end{enumerate}
It is usually not required that an iteration tree satisfies condition \eqref{IT.7}, 
and the iteration trees satisfying this condition are called \markdef{normal} iteration
trees. A straightforward induction shows that condition \eqref{I.IT.11} is a consequence of the previous conditions. 

If $E_\xi$ was always applied to $M_\xi$, then we would have a \markdef{linear} iteration
that is moreover \markdef{internal}---\ie, each extender used in the construction
belongs  to the model to which it is applied. The wellfoundedness of such an iteration
follows from a rather mild additional condition about the extenders. 
On the other hand, the choice of condition~\eqref{IT.7} is behind the 
power of the iteration trees (see the proof of Theorem~\ref{T1}). 
This condition also prevents the obstacle to wellfoundedness of the iteration exposed in 
Lemma~\ref{L.E4} (see Lemma~\ref{L.no-overlaps}). 
It will be important that the extenders in $\vec E$ have the property (E2) (see \S\ref{SS1}), that 
for every generator $\xi$ of $E$ we have $\Ult(\V,E)\cap \V_\xi=\V_\xi$, or in other
words, that every extender  is $\xi$-strong for each of its generators~$\xi$. 

\begin{lem} \label{L.no-overlaps} Assume  $\langle T, M_\eta, E_\xi\mid \eta\leq\zeta,
\xi<\zeta\rangle$, 
 is an iteration tree such that every 
extender used in its construction  satisfies (E2). 
Assume $E_0$ and $E_1$ are extenders used along the same branch of $T$ and $E_1$ 
was used after~$E_0$. 
Then $\kappa=\crit(E_1)$ is greater than the supremum of all generators of~$E_0$. 
\end{lem} 

\begin{proof} 
Assume the contrary. Therefore $\kappa$ is less or equal than some generator of $E_0$, 
and  (E2) implies that $\kappa$ is smaller than  the strength of $E_0$. 
Let us first consider the case when 
 $E_1$ was applied to 
$M_\beta=\Ult(M_\alpha,E_0)$ for some $\alpha$ along the branch. 
If $\xi$ is the strength of $E_0$ then we have  
$M_\beta\cap \V_\xi=M_\alpha\cap \V_\xi$. 
Therefore, since $\kappa<\xi$ and  $E_1$  could be applied to $M_\beta$, 
 it could be applied to $M_\alpha$ as well. 
 This  contradicts our assumption that $E_1$ was applied to $M_\beta$. 
 
We may therefore assume $E_1$ was applied to a model $M_\gamma$ 
that is a direct limit of other models 
on the branch, including $\Ult(M_\alpha,E_0)$ for some $\alpha$. The above and an  
induction argument show that 
$\crit(j_{E_1})$ is greater than any generator of any extender used in the construction of this
branch, including the generators of $E_0$. 
 \end{proof} 

Note that an iteration tree
  $\langle T, M_\eta, E_\xi\mid\eta\leq\omega_1, \xi<\omega_1\rangle$ 
has a branch of length~$\omega_1$ by~\eqref{IT.4}. 
In Lemma~\ref{L.branch} below, and elsewhere, we assume that all critical points
of elementary embeddings $j_{\xi\eta}$  used in building 
 an iteration tree are countable ordinals. 
 Lemma~\ref{L.branch} is an attempt to extract one of the key ideas from the proof of
 Theorem~\ref{T1}. 
It is part of the proof of the comparison lemma for mice,
which at the level of generality we are dealing in now, is due
to Martin and Steel \cite{martin94iterationtrees}. 
 Preprint 
\cite{Schi:Notes} was very helpful during  the extraction of this lemma.

\begin{lem}\label{L.branch}
Assume  $\langle T, M_\eta, E_\xi\mid \eta\leq\omega_1,\xi<\omega_1\rangle$ is an iteration tree
and $b\subseteq \omega_1$ is its cofinal branch. Assume 
$H\prec H_{(2^{\aleph_1})^+}$ is countable
and it contains the iteration tree. Let  $\bar H$ be  its transitive collapse
and let $\pi\colon H\to \bar H$ be the collapsing map. 

With $\alpha=H\cap \omega_1$ we have the following. 
\begin{enumerate}
\popcounter
\item \label{L.branch.0}  $\alpha\in b$ and
$M_\alpha$ is the direct limit of $M_\xi$, $\xi\in b\cap \alpha$. 
\item\label{L.branch.1} $M_{\omega_1}\cap H$ is the direct limit of 
$\langle M_\xi\cap H, \xi\in b\cap \alpha\rangle$, 
\item\label{L.branch.2}  
$\pi^{-1}$
  and $j_{\alpha\omega_1}$  agree on $M_\alpha\cap \bar H$  
 and in particular
 \begin{enumerate}
 \item 
 $\pi^{-1}[M_\alpha\cap \bar H]$ is included in 
$M_{\omega_1}\cap H$, and
\item $\crit(j_{\alpha\omega_1})= \alpha$, 
 \end{enumerate} 
\item \label{L.branch.3} 
If $\gamma$ is the strength of the extender $E_{\alpha'}$ such 
 that $\Ult(M_\alpha,E_{\alpha'})$ is the successor of $M_\alpha$ along $b$ 
 then  $j_{\alpha\omega_1}=j_{E_{\alpha'}}\circ i$ for an elementary embedding $i$
such that  $i\rs \V_\gamma$  is the identity, 
 \item \label{L.branch.4} $M_{\omega_1}\cap \cP(\alpha+1)
 =M_\alpha\cap  \cP(\alpha+1)$.
 \end{enumerate} 
\end{lem} 

\begin{proof} Clause \eqref{L.branch.0} follows by \eqref{IT.4} and \eqref{IT.8+}. 
By elementarity in $H$ it holds that $M_{\omega_1}$ is the direct limit of 
$M_\xi$, for $\xi\in b$, and therefore \eqref{L.branch.1} follows. 
By applying this and \eqref{L.branch.0}, \eqref{L.branch.2} follows as well. 
Let $\gamma$ be as in \eqref{L.branch.3}. 
We have $j_{\alpha\omega_1}=j_{E_{\alpha'}}\circ i$ for some $i$. By Lemma~\ref{L.no-overlaps}
we have $\crit(i)\geq \gamma$ and  \eqref{L.branch.3} follows.  

Clause \eqref{L.branch.4} is a consequence of \eqref{L.branch.2}. 
\end{proof}

\subsection{The iteration game} 
In what follows elements of an iteration tree will be proper classes instead of 
sets. The arguments can be formalized within ZFC by using the standard
reflection and compactness devices. We leave out the well-known details.   
We define a two-player game of transfinite length $\zeta$
in which the players build an iteration tree, starting from a model~$M$ 
and a system of extenders $\vec E$ in $M$. 
Let $M_0=M$. In his $\alpha$-th move \playerI\ picks 
an extender $E_\alpha$ in~${\vec E}^{M_\alpha}$
such that the strength of $E_\alpha$ is greater than the strength of $E_\beta$ for all
 $\beta<\alpha$. 
Then the referee finds the minimal $\beta\leq \alpha$ such 
that  (writing $\crit(E)$ for the critical point of the elementary embedding $j_E$)
$\V_{\crit(E_\alpha)+1}\cap M_\beta=\V_{\crit(E_\alpha)+1}\cap M_\alpha$. 
Hence~$M_\beta$  is the earliest model in the iteration to which $E_\alpha$ can be applied. 
Referee then   defines 
$$
M_{\alpha+1}=\Ult(M_\beta,E_\alpha), 
$$
with $j_{\beta\alpha+1}$ being the corresponding embedding. 
The referee also extends the tree order $T$ by adding $\alpha+1$ as 
an immediate successor to $\beta$. 
At a limit stage $\alpha$ \playerII\ picks a maximal branch $\langle M_\xi\mid \xi\in b\rangle$ 
of $T$ such that~$b$ is cofinal in $\alpha$ and lets $M_\alpha$ be
the direct limit of the system $\langle M_\xi, j_{\xi,\eta}\mid  \xi<\eta\in b\rangle$. 
If $M_\alpha$ is well-founded then we identify it with its transitive collapse. 

The first player who disobeys the rules loses. Assume both players obeyed the rules of the 
iteration game. 
If $M_\alpha$ is ill-founded then the game is over and I wins. 
If all $M_\alpha$ are well-founded, then \playerII\ wins,
and otherwise \playerI\ wins. 

 For definiteness, we call the above game the \markdef{$(\vec E,\zeta)$-iteration game in $M$}. 
We shall suppress $\vec E$ and $M$ whenever they are clear from the context. 
An \markdef{($\vec E,\zeta$)-iteration strategy} 
is a winning strategy for \playerII\ in the iteration game of length~$\zeta$. 
A pair $(M,\vec E)$ is \markdef{($\vec E,\zeta$)-iterable} if \playerII\ has a $\vec E,\zeta$-winning strategy. 
An \markdef{($\vec E,\zeta$)-iteration} of $M$ is an elementary 
embedding $j_{0\zeta}\colon M\to M_\zeta$
extracted from an $\vec E$-iteration tree on $\zeta$. 
A model is \markdef{fully iterable} if it is $(\vec E,\zeta)$-iterable for every ordinal $\zeta$. 

Universally Baire sets of reals were introduced in  \cite{feng92universally}
and are defined in \S\ref{S.Cohen}. 

\begin{thm}[{Martin--Steel, \cite{martin94iterationtrees}}]\lbl{T.MS}
Assume there exist $n$  Woodin cardinals and a measurable above them all. 
For every 
 $a\in \mathbb R$ and every $m\leq n$ there exists an 
  inner model containing $a$ and $m$ Woodin cardinals, 
denoted by $M_m(a)$. It 
 is $(\omega_1+1)$-iterable and its Woodin cardinals 
 are countable ordinals in $V$. 

If there are class many Woodin cardinals then 
every $M_n(a)$ is fully iterable in every forcing extension 
and the iteration strategy is coded by a universally Baire set of reals. 
\qed
\end{thm}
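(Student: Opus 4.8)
The plan is to reduce the statement to the construction and $(\omega_1+1)$-iterability of the \emph{sharp} $M_m^\#(a)$, the minimal countable active $a$-premouse with $m$ Woodin cardinals below the critical point of its top extender, and then to recover $M_m(a)$ from it. First I would note that, granting a countable $(\omega_1+1)$-iterable $M_m^\#(a)$, the proper-class model $M_m(a)$ is obtained by iterating the top measure of $M_m^\#(a)$ through the ordinals and passing to the direct limit: this limit is well founded because well-foundedness of a linear iteration of a single normal measure follows from $(\omega_1+1)$-iterability by the standard reflection-and-absoluteness argument; the $m$ Woodin cardinals of $M_m(a)$ are the images (unmoved, since they lie below the first critical point used) of those of $M_m^\#(a)$ and are therefore countable ordinals of $V$; and an iteration strategy for $M_m(a)$ is copied up from one for $M_m^\#(a)$. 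So the real targets are the $M_m^\#(a)$, $m\le n$.

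For \emph{existence} I would run the fully backgrounded extender construction relative to $a$ --- the Mitchell--Steel $K^c$-construction as exposited in \cite{St:Outline} --- inside $V_\mu$, where $\delta_1<\dots<\delta_n$ are the Woodin cardinals and $\mu>\delta_n$ is measurable with normal measure $U$: an extender $F$ is certified onto the constructed sequence only if it has a background extender in $V_\mu$ agreeing with it on a sufficiently long initial segment and with matching critical point. The standard analysis then shows that each $\delta_i$, being a limit of $V$-cardinals carrying background-certified extenders (here Woodinness of $\delta_i$ below $\mu$ is used), stays Woodin in the resulting premouse $N$, so $N$ has $n$ Woodins. Applying $j_U\colon V\to\Ult(V,U)$, the model $j_U(N)$ carries an extender with critical point $\mu$ witnessing a measurable above its $n$ Woodins; collapsing a countable elementary substructure gives a countable candidate, and comparison among $(\omega_1+1)$-iterable candidates selects a least one, namely $M_m^\#(a)$, for each $m\le n$.

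The main obstacle is \emph{iterability}: equipping $M_m^\#(a)$ with an $(\omega_1+1)$-iteration strategy, for which I would use the Martin--Steel copying-and-realizability machinery of \cite{MaSte:Iteration}. The strategy is the canonical one choosing cofinal well-founded branches. Given a putative iteration tree $\mathcal T$ on $M_m^\#(a)$ of countable length, the copying construction lifts it to a tree $\mathcal T^*$ on $V$ (or on a tall enough $V_\theta$) built from genuine $V$-extenders, with a commuting family of embeddings from the $\mathcal T$-models into the $\mathcal T^*$-models. The Martin--Steel branch-existence theorem --- which is where the measurable $\mu$ above all the Woodins enters, through a realizability/reflection argument in which the models of $\mathcal T^*$ are realized inside iterates of $V$ by $U$, whose well-foundedness rules out a dead end --- supplies a cofinal well-founded branch $b^*$ of $\mathcal T^*$, whose pullback $b$ is a cofinal well-founded branch of $\mathcal T$. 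Uniqueness of $b$ --- needed both for the strategy to be well defined and for the $(\omega_1+1)$st branch to exist --- is the Martin--Steel ``zipper'' lemma: two distinct cofinal well-founded branches of a tree of length $\le\omega_1$ would force $\delta(\mathcal T)$ to reflect incompatible extenders, contradicting the normality of the extenders and the countable completeness built into Definition~\ref{D.Ext}. Uniqueness also makes the construction independent of the hull choices, so it genuinely defines an $(\omega_1+1)$-iteration strategy, uniformly in $m\le n$; this is also what legitimizes the comparison appealed to above.

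For the last clause, assume class-many Woodins and fix a forcing notion $\bbP$; choose a Woodin $\delta>|\bbP|+\mu$ with still more Woodins above. The $(\omega_1+1)$-strategy for $M_n(a)$ built above is definable from backgrounds below $\delta$, and the Martin--Steel branch lemmas are local: they go through verbatim over $V[G]$, whose relevant rank-initial segments stay sufficiently iterable since $\delta$ remains Woodin in $V[G]$ and there are Woodins above it. Running the realizability argument inside $V[G]$ extends the strategy to arbitrary ordinal length, so $M_n(a)$ is fully iterable in $V[G]$; as $\bbP$ is arbitrary, this holds in every forcing extension. I expect the iterability step to be the only genuinely hard part: the backgrounded construction, the passage from the sharp to the proper-class model, and the generic-extension bookkeeping are all routine once the Martin--Steel iterability apparatus together with the measurable above the Woodins is available.
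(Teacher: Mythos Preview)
The paper does not prove this theorem: note the \qed\ immediately following the statement. Theorem~\ref{T.MS} is quoted as a black-box result attributed to Martin--Steel, with only a one-line remark afterward that the class-many-Woodins hypothesis is not optimal and a pointer to Theorem~\ref{T.M_1} for the case $n=1$. There is thus no ``paper's own proof'' to compare your proposal against.

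As an outline of the standard inner-model-theoretic argument your sketch is broadly reasonable (backgrounded $K^c$-type construction for existence, copying/realizability for iterability, iterating out the top measure to pass from $M_m^\#(a)$ to $M_m(a)$). One point that is genuinely off: your account of branch \emph{uniqueness} is not the actual mechanism. Uniqueness of the cofinal well-founded branch for a countable tree on $M_m^\#(a)$ does not follow from ``normality of the extenders and the countable completeness built into Definition~\ref{D.Ext}''; it follows from the \emph{smallness} of the mouse. If a tree $\mathcal T$ on $M_m^\#(a)$ had two distinct cofinal well-founded branches $b\neq c$, then the zipper argument shows $\delta(\mathcal T)$ is Woodin in $M^{\mathcal T}_b$ (with respect to functions in $M^{\mathcal T}_b\cap M^{\mathcal T}_c$), and together with the extenders already on the sequence this produces one Woodin cardinal too many inside an iterate of an $m$-small mouse---a contradiction. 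Countable completeness guarantees well-foundedness of single ultrapowers, not uniqueness of branches; the latter is where the particular level $m$ in the mouse hierarchy is used.
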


The assumption that there are class many Woodin cardinals is not optimal. 
For the case when $n=1$ see the proof
 of Theorem~\ref{T.M_1} below. 
 
The following lemma will be needed in \S\ref{S.WlimW}. 

\begin{lem}\label{L.uB.strategy} 
Assume $\Sigma$ is an  $\omega_1$-iteration strategy that is universally Baire
and moreover that all sets projective in $\Sigma$ are universally Baire. 
Then~$\Sigma$  can always be 
extended to a  full iteration strategy. 
\end{lem} 

\begin{proof}  Since the statement `$\Sigma$ is a winning iteration strategy' is 
projective in $\Sigma$ it is forcing-absolute. 
One constructs an iteration strategy $\overline\Sigma$ for \playerII\ by induction on limit ordinals
 $\alpha\geq \omega_1$. The recursive hypothesis is that every play of the iteration game
 in which \playerII\ obeyed $\overline\Sigma$
 has  the following property. If $\kappa$ is the cardinality of the resulting 
 iteration tree, then in the extension by Levy collapse 
 $\Coqq\omega{\kappa}$  of $\kappa$ to $\omega$ 
 tree $T$ is the result of a play of an iteration game in which \playerII\ has obeyed~$\Sigma$. 
 
Let $T$ be an iteration tree of height $\alpha$ resulting from an iteration game
in which \playerII\ has  obeyed the extension of $\Sigma$ constructed so far. 
Go to a forcing extension by Levy collapse 
$\Coqq\omega{|\alpha|}$, 
 and let $\bfb$ be the $\alpha$-branch chosen by~$\Sigma$. 

 We claim that for every $\beta<\alpha$ the condition whether $\beta\in \bfb$ 
 is decided by the maximal condition. Otherwise, fix $\beta$ and choose  $p_1$ and $p_2$ 
 such that $p_1\forces \check \beta\in\dot \bfb$ and $p_2\forces \check \beta\notin\dot \bfb$. 
 Let $G_1\subseteq \Coqq\omega{\alpha}$ be a generic filter such that $p_1\in G_1$. 
 By using the homogeneity of $\Coqq\omega{\alpha}$ 
 in $\V[G_1]$ we can choose
a generic filter~$G_2$ such that $p_2\in G_2$ and $\V[G_1]=\V[G_2]$. 
Therefore in $\V[G_1]=\V[G_2]$ we have that $\Sigma$ does not choose a unique 
$\alpha$-branch of $T$, a contradiction. 

Therefore branch $\bfb$ is decided in the ground model. 
We can therefore extend strategy $\overline\Sigma$ by having \playerII\ choose $\bfb$ at the $\alpha$-th stage. Since well-foundedness is absolute for forcing extensions, this defines  a winning
$(\alpha+1)$-iteration strategy. 
\end{proof} 

\section{Genericity iterations}
\label{S.GI} 
In the present section we formulate and prove results that make the extender algebra unique. 

Assuming $M$ is sufficiently iterable, we may talk about iteration strategies for \playerI. 
These are the strategies that, when played against \playerII's winning strategy, produce models 
(necessarily well-founded) with desirable properties.

\begin{thm} \lbl{T1}Assume $(M,\vec E)$ is $(\omega_1+1)$-iterable
and $\vec E$ witnesses a countable ordinal $\delta$ is a Woodin cardinal in $M$. 
Then for  every 
$x\subseteq \omega$ there is a (well-founded) countable iteration $j\colon M\to M^*$ such that 
$x$ is $j(\cWdo(\vec E))$-generic  over~$M^*$. 
\end{thm}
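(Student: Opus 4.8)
The plan is to play the $(\vec E,\omega_1+1)$-iteration game against II's winning strategy, with player~I following a ``genericity'' strategy designed so that, along the unique cofinal branch, the real $x$ is forced to be generic for $j(\cWdo(\vec E))$ over the resulting model $M^*$. First I would set up the game: II plays by the $(\omega_1+1)$-iteration strategy whose existence is guaranteed by iterability, so all models produced are well-founded, and at the end we obtain $j_{0\omega_1}\colon M\to M_{\omega_1}$ along a cofinal branch $b\subseteq\omega_1$, and then $j_{0,\omega_1+1}\colon M\to M^*=M_{\omega_1+1}$ by taking one more direct limit (or simply $M^*=M_{\omega_1}$, after checking genericity is already achieved). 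The iteration tree is of countable length, and by the iterability assumption $M^*$ is well-founded and countable (since $M$ is countable and each step is a definable ultrapower).

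The heart of the argument is I's strategy. By Lemma~\ref{L3}, applied inside $M_\alpha$ to $j_{0\alpha}(\cTdo(\vec E))$ (which, by Lemma~\ref{L4} applied in $M_\alpha$, has the $j_{0\alpha}(\delta)$-chain condition there), it suffices to ensure that for every maximal antichain $\cA$ of $j_{0,\omega_1+1}(\cWdo(\vec E))$ lying in $M^*$, the filter $\Gamma_x$ meets $\cA$; equivalently, writing $\cA$ as (the equivalence classes of) a sequence $\langle\varphi_\eta\mid\eta<\mu\rangle$, that $x\models\bigvee_{\eta<\mu}\varphi_\eta$. By maximality, $j_{0\alpha}(\cTdo(\vec E))$ proves $\bigvee_{\eta<\mu}\varphi_\eta$ at the stage where the antichain first appears, and it is here that the axioms $\Psi(\vec\varphi,\kappa,\lambda)$ of the theory do the work: if $x\not\models\bigvee_{\eta<\kappa}\varphi_\eta$ for the minimal $\kappa$ to which $\vec\varphi$ reflects (the only potentially problematic case, as reals are never moved), then in $M_\alpha$ the set $A=\{u\subseteq\gamma: u\not\models\bigvee_{\eta<\kappa}\varphi_\eta\}$ is nonempty, and Woodinness gives an extender $E\in\vec E^{M_\alpha}$ with $\crit(E)=\kappa$ reflecting $\vec\varphi$; applying $j_E$ and using (E1), (E2) together with the fact that $\crit(E)=\kappa$ is a \emph{countable} ordinal in $V$, the axiom $\Psi(j_E(\vec\varphi),\kappa,\lambda)$ lets us ``correct'' $x$ at stage $\alpha$ by having I pick precisely such an $E_\alpha$. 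The standard bookkeeping is: enumerate in $V$ all pairs (antichain of $j_{0\alpha}(\cWdo(\vec E))$ in the putative final model that could arise) — more carefully, use that the tree has length $\omega_1$ and a counting-of-antichains argument — so that every maximal antichain of $j_{0,\omega_1+1}(\cWdo(\vec E))$ in $M^*$ is an image $j_{0,\omega_1+1}(\cA_0)$ of an antichain appearing in some $M_\alpha$ with $\alpha\in b$, and is handled at that stage.

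The main obstacle I expect is the bookkeeping/reflection argument that \emph{every} maximal antichain of the final algebra $j_{0,\omega_1+1}(\cWdo(\vec E))$ over $M^*$ is in fact witnessed at some countable stage along the branch $b$ — i.e., that $\omega_1$ many correction steps suffice. This is where Lemma~\ref{L.branch} is used: the key point is that $M_{\omega_1+1}\cap V_{\alpha+1}\cap H = M_\alpha\cap V_{\alpha+1}\cap\bar H$ for the relevant countable hull, so that a maximal antichain of the final extender algebra, being (coded by) a subset of $\gamma=\omega$, ``appears'' in a model $M_\alpha$ with $\alpha=H\cap\omega_1\in b$, where it is then corrected by choosing $E_\alpha$ appropriately. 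One then checks that no antichain is destroyed at later stages because the extenders used after stage $\alpha$ have critical point above $\alpha$ (Lemma~\ref{L.no-overlaps} / Lemma~\ref{L.branch}\eqref{L.branch.2}(b)), so the computation of whether $x\models\bigvee_{\eta<\mu}\varphi_\eta$ is not disturbed. Assembling these pieces — I's strategy, II's iterability guaranteeing well-foundedness, and the reflection argument ensuring all antichains are met — gives that $x$ is $j_{0,\omega_1+1}(\cWdo(\vec E))$-generic over $M^*$, as required.
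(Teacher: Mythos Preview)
Your proposal has the overall architecture inverted. In the paper's proof, player~I's strategy is: at stage $\alpha$, if $x\models j_{0\alpha}(\cTdo(\vec E))$ then \emph{stop} (and $j_{0\alpha}$ is the required countable iteration); otherwise pick the extender witnessing the failure of an axiom $\Psi(\vec\varphi,\kappa,\lambda)$ with \emph{minimal} $\lambda$. The hull argument via Lemma~\ref{L.branch} is then used as a \emph{contradiction device}: one assumes the iteration never stops, obtains a tree of height $\omega_1$ with cofinal branch $b$, takes $\alpha=H\cap\omega_1\in b$, and shows that the axiom $\Psi(\vec\varphi,\alpha,\lambda)$ that I was trying to fix at stage $\alpha$ is in fact already satisfied by $x$ --- a contradiction. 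Thus the iteration must terminate at some countable stage, and that is what gives the \emph{countable} iteration the theorem asks for.

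You instead run the iteration all the way to $\omega_1$ (or $\omega_1+1$), take $M^*=M_{\omega_1}$, and try to use Lemma~\ref{L.branch} positively, as a reflection tool ensuring every antichain of the final algebra is ``handled'' at some earlier stage. But then your $j\colon M\to M^*$ is an iteration of length $\omega_1$, not a countable one, so you have not proved the theorem as stated. (Your sentence ``the iteration tree is of countable length'' is simply false under your own setup.) Moreover, the bookkeeping you sketch --- enumerating antichains of a model that does not yet exist --- is circular as written; the paper avoids this entirely by working with the axioms of the theory $\cTdo(\vec E)$ rather than antichains, and by arguing for termination rather than for genericity at the top. Finally, the \emph{minimality} of the $\lambda$ chosen by I at each stage is essential in the paper's contradiction (it guarantees that the extender $E_{\alpha'}$ actually applied to $M_\alpha$ along $b$ has strength at least that of $E_\alpha$, so that Lemma~\ref{L.branch}\eqref{L.branch.3} applies); your strategy description omits this.
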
 

\begin{proof} 
By Lemma~\ref{L4} and Lemma~\ref{L3} we only 
 need to assure $x\models j(\cTdo(\vec E))$. 
 Define a strategy for \playerI\ for building an $\vec E$-iteration 
tree with $M_0=M$ as follows. 
Assume $\langle T, M_\xi, E_\xi\mid \xi\leq\alpha\rangle$ has been constructed. 
If $x\models j_{0\alpha}(\cTdo(\vec E))$ then $j_{0\alpha}\colon M\to M_\alpha$ is the required iteration and we stop. 
Otherwise, let $\lambda$ be the minimal cardinal 
such that there are $\vec \varphi$,  $\kappa$, 
and a $\vec \varphi,\lambda$-strong extender $E\in j_{0\alpha}(\vec E)$ with $\crit(E)=\kappa$ 
such that $x\not\models \Psi(\vec\varphi,\kappa,\lambda)$. 
 Fix  such $\vec \varphi$, $\lambda$ and $E$. We have 
 $x\not\models \bigvee_{\xi<\kappa} \varphi_\xi$ 
 and $x\models \bigvee_{\xi<\lambda} \varphi_\lambda$. 
 and note that  $\lambda<j_{0\alpha}(\delta)$. 
Then  let \playerI\ play $E_\alpha=E$. 
Note that $j_{E_\alpha}(\kappa)\geq \lambda$. 

This describes the iteration strategy for \playerI. 
We claim that if \playerII\ responds with his winning  strategy
then the process of building the iteration tree terminates at some countable stage. 
Assume otherwise. Let $\langle T, M_\xi, E_\xi\mid \xi<\omega_1\rangle$ be the resulting iteration tree
and let $b\subseteq \omega_1$ be its cofinal branch such that $M_{\omega_1}$, the direct limit of 
$\langle M_\xi\mid \xi\in b\>$, is well-founded.

Fix a  countable $H\prec H_{(2^{\aleph_1})^+}$ containing everything relevant. 
Let $\alpha=H\cap \omega_1$ and let $\bar H$ be the transitive collapse of $H$, 
with $\pi^{-1}\colon \bar H\to H(\theta)$ the inverse of the collapsing map.
Since the iteration did not stop at stage $\alpha$, 
we can consider the extender~$E$ applied  to $M_\alpha$ along the well-founded $\omega_1$-branch of $T$.
Note that $E$ may be different from $E_\alpha$, the extender chosen in $M_\alpha$ during the run of the iteration game. 
Instead, $E$ is equal to $E_\beta$ for some $\beta\geq \alpha$.  

By \eqref{L.branch.3} of Lemma~\ref{L.branch} we have that $\pi^{-1}$ and $j_{\alpha\omega_1}$ agree on 
$M_\alpha\cap \bar H$ because $i$ is the identity on $\V_\gamma$. 
In particular, $\crit(E)=\alpha$. 

By the choice of the iteration, 
$E$ is $\vec\varphi,\lambda$-strong for  some $\vec\varphi$ and $\lambda$ such that 
$\vec\varphi$ reflects to $\alpha$ but $x\not\models\bigvee_{\xi<\alpha} \varphi_\xi$ and
$x\models  \bigvee_{\xi<\lambda}\varphi_\xi$. 
By definition we have $\varphi_\eta\in M_\alpha\cap \V_\alpha$ for all $\eta<\alpha$, 
hence the formula $\bigvee_{\eta<\alpha} \varphi_\eta$ belongs to~$\V_{\alpha+1}$. 
By \eqref{L.branch.0} of Lemma~\ref{L.branch} there are $\xi<_T \alpha$ and $\vec\psi \in M_\xi$
such that $\vec\varphi=j_{\xi\alpha}(\vec \psi)$. 

Since $j_{\alpha\omega_1}$ and $j_E$ agree
on $M_\alpha\cap \V_\lambda$, we have
that  $j_{\alpha\omega_1}(\vec\varphi)$ implies $\bigvee_{\xi<\lambda} \varphi_\xi$, 
hence $x\models j_{\alpha\omega_1}(\vec \psi)$ holds in~$H$. 
Thus for some $\xi\in H\cap \omega_1$ we have $x\models \varphi_\xi$
and therefore $x\models \bigvee_{\xi<\alpha} \varphi_\xi$, a contradiction. 
\end{proof} 



A number of  extensions of Theorem~\ref{T1} in different directions are in order (some of their obvious 
common generalization are being omitted).

 Assume an extender $E$ and a forcing  $\bbP$   
are such that for some $\kappa$ we have $\crit(j_E)>\kappa$  
and $\bbP$ is  in $\V_\kappa$. 
Then $E$ still defines an extender 
 in the extension by $\bbP$ and this extender is 
 $\lambda$-strong for every $\lambda$ for which $E$ is 
$\lambda$-strong. This is essentially a consequence of the Levy--Solovay result 
that a measurable cardinal cannot be destroyed by a small forcing (\cite{Kana:Book}). 
The converse is also true (\cite{HaWo:Small}): 
in a forcing extension $\V[G]$ by forcing $\bbP\in \V_\kappa$ 
every $\lambda$-strong elementary embedding of $\V[G]$ into an inner model 
with critical point $\kappa$ is a lift of a  ground model $\lambda$-strong elementary embedding with critical point $\kappa$. This is a bit deeper than the corresponding result for elementary embeddings that are not necessarily $\lambda$-strong.  

In the following lemma and elsewhere we shall slightly abuse the notation and 
denote the  extender obtained from $E$ in a forcing extension by the same letter.

\begin{thm}  \lbl{T1.1.1}Assume $(M,\vec E)$ is $(\omega_1+1)$-iterable
and $\vec E$ witnesses a countable ordinal $\delta$ is a Woodin cardinal in $M$. 
Assume moreover $\kappa<\min\set{\crit(j_E)}{ E\in \vec E}$ and $\bbP\in \V_\kappa\cap M$ 
is a forcing notion. 
If $G\subseteq \bbP$ in $V$ is $M$-generic then 
for every $x\subseteq \omega$ there is a (well-founded) 
countable iteration $j\colon M[G]\to M^*[G]$ such that 
$x$ is $j(\cWdo(\vec E))$-generic  over~$M^*[G]$. 
\end{thm}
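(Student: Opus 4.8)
The plan is to reduce Theorem~\ref{T1.1.1} to Theorem~\ref{T1} applied in the generic extension $V[G]$. First I would observe that since $\kappa<\min\{\crit(j_E)\colon E\in\vec E\}$ and $\bbP\in V_\kappa\cap M$, the remarks immediately preceding the statement tell us that each $E\in\vec E$ remains an extender in $M[G]$, with the same critical point and the same strength; in particular, whether a given $\xi$ is a generator of $E$ and whether $V_\lambda\subseteq\Ult(\cdot,E)$ are unchanged because these depend only on $E$ and $V_{\crit(E)+1}$, which is unaffected by the small forcing $\bbP$. Likewise, $\delta$ is still a regular cardinal of $M[G]$ (small forcing below $\delta$ preserves this), and the same family $\vec E$ witnesses that $\delta$ is Woodin in $M[G]$: for any $A\subseteq V_\delta^{M[G]}$, one uses a $\bbP$-name $\dot A\in M$ and the fact that extenders in $\vec E$ with critical point above $\kappa$ reflect $\dot A$ (coded as a subset of $V_\delta^M$), then lifts the resulting $\lambda$-strong embedding through the small forcing à la Levy--Solovay. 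Hence $(M[G],\vec E)$ satisfies the structural hypotheses needed for Theorem~\ref{T1} to apply inside $V[G]$, \emph{provided} I can check it is $(\omega_1+1)$-iterable in $V[G]$.

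The main point, therefore, is iterability: I need that $(M[G],\vec E)$ is $(\omega_1+1)$-iterable in $V$ (equivalently $V[G]$, since $\bbP$ is small and an $(\omega_1+1)$-iteration strategy in $V$ will still be one in $V[G]$ because the relevant $\omega_1$ and the branch-picking are absolute enough — here one invokes that the genericity iteration runs for countably many steps and $\bbP$ does not collapse $\omega_1$). The key observation is that an $\vec E$-iteration tree on $M[G]$ is essentially an $\vec E$-iteration tree on $M$ with $G$ carried along: since every extender used has critical point above $\kappa$, it never disturbs $\bbP$ or $G$, so $M_\xi[G]$ makes sense at every stage and $M_\xi\mapsto M_\xi[G]$ commutes with the iteration embeddings (each $j_{\xi\eta}$ extends canonically to $M_\xi[G]\to M_\eta[G]$ fixing $G$, by the standard lifting lemma for small forcing below the critical point). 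The comparison of the two trees is literally the identity on the underlying extender choices; well-foundedness of $M_\xi[G]$ is equivalent to well-foundedness of $M_\xi$ because $M_\xi$ is a definable inner model of $M_\xi[G]$ containing all its ordinals. So II's winning strategy $\Sigma$ for the $(\omega_1+1)$-iteration game on $(M,\vec E)$ induces a winning strategy $\Sigma[G]$ on $(M[G],\vec E)$: given a tree on $M[G]$, strip off $G$ to get a tree on $M$, apply $\Sigma$ to pick the branch, and put $G$ back. This $\Sigma[G]$ exists in $V[G]$ (in fact in $V$, since $G\in V$), establishing iterability.

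With iterability in hand, I would simply apply Theorem~\ref{T1} inside $V[G]$ to the pair $(M[G],\vec E)$ and the real $x\subseteq\omega$: this yields a well-founded countable iteration $j\colon M[G]\to M^{**}$ such that $x$ is $j(\cWdo(\vec E))$-generic over $M^{**}$. To match the statement's notation $M^*[G]$, I would note that the iteration $j$ factors as the lift of a genericity iteration $j_0\colon M\to M^*$ of $M$, with $M^{**}=M^*[G]$ and $j=j_0^+$ the canonical extension; this is automatic from the previous paragraph since every step of $j$ arises from an $\vec E$-extender with critical point above $\kappa$, so $G$ is carried along verbatim and $j(\cWdo(\vec E))=j_0(\cWdo(\vec E))$ as computed in $M^*$ (the algebra is defined from $\vec E$ and the ordinals below $j_0(\delta)$, which live in $M^*$, not in $M^*[G]$). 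Hence $x$ is $j_0(\cWdo(\vec E))$-generic over $M^*[G]$, which is exactly the assertion.

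The step I expect to be the main obstacle is the clean verification that the $(\omega_1+1)$-iteration \emph{strategy} transfers to $M[G]$ — in particular making sure that at limit stages the branch chosen by $\Sigma[G]$ gives a well-founded direct limit, which comes down to the observation that the direct limit of the $M_\xi[G]$ along a branch is the ($G$-generic extension of) the direct limit of the $M_\xi$, so well-foundedness is inherited from the $G$-free tree where $\Sigma$ does its job. Everything else is the routine Levy--Solovay lifting machinery, which I would cite rather than reprove.
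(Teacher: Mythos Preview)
Your proposal is correct and matches the approach the paper itself sketches in the final paragraph of its proof: observe that small forcing below every critical point lifts the extenders and the $(\omega_1+1)$-iteration strategy from $M$ to $M[G]$, so that $(M[G],\vec E)$ is $(\omega_1+1)$-iterable with $\delta$ still Woodin, and then simply invoke Theorem~\ref{T1} for $(M[G],\vec E)$. One small slip worth flagging: your closing claim that $j(\cWdo(\vec E))$ coincides with the algebra computed in $M^*$ (rather than $M^*[G]$) is not correct in general---forcing with $\bbP$ adds new $\CL_{\delta,\omega}$-formulas and hence new axioms to $\cTdo(\vec E)$---but this is harmless, since Theorem~\ref{T1} applied to $(M[G],\vec E)$ directly yields genericity for the $M^*[G]$-version of the algebra, which is exactly what the statement asserts (and what the paper's proof, which has player~I compute $\Psi(\vec\varphi,\kappa,\lambda)$ in $M[G]$, also produces).
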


\begin{proof} By the above discussion $\delta$ is still a Woodin 
cardinal in $M[G]$ as witnessed by 
 $\vec E$.  
The proof is similar to the proof of Theorem~\ref{T1}. 
In his strategy,   player~I computes  
$\Psi(\vec\varphi,\kappa,\lambda)$ in $M[G]$ instead of $M$. 
Since $G\in V$, this describes an iteration strategy of \playerI\ in $V$. Thus \playerII\ can respond to it by using his 
winning iteration strategy and 
the other details of the proof are identical. 

As a matter of fact, a simple proof shows that the full iterability of $M$ implies 
the full iterability of $M[G]$ (essentially using the same strategy). 
The point is that, since $\bbP$ is small, 
 the ultrapowers of $M$ lift to ultrapowers of $M[G]$. 
 Therefore 
this is a consequence of Theorem~\ref{T1}. 
\end{proof}

\begin{proof}[The second proof of Lemma~\ref{L.universal}]
We now provide a different proof that 
if $\gamma$ is any cardinal less than the least critical point of each extender in $\vec E$
and $\bbP$ is a forcing notion of cardinality $\gamma$ 
then in  $\cWdg(\vec E)$ there is a condition $p$ that forces that 
$\bbP$ is a regular subordering of $\cWdg(\vec E)$.  
The present proof will  also show that 
there exist  
  condition $q\in \cWdg(\vec E)$ and condition $r$ in $\bbP$ that 
force that  $\bbP$ is forcing-equivalent to $\cWdg(\vec E)$.
Assume $G\subseteq \bbP$ is generic over $M$. 
We may assume $\bbP=(\gamma,<_{\bbP})$ for  some 
ordering~$<_{\bbP}$ on $\gamma$. Since $\delta$ is a countable ordinal and $\gamma<\delta$, 
by using Theorem~\ref{T1} we can find an iteration $j\colon M\to M^*$ such that 
$G$ is $j(\cWdg(\vec E))$ generic over $M^*$. By our assumption $\gamma$ is
smaller than the critical point of $j$ and therefore $j(\bbP)=\bbP$. 
Hence in $M^*$ there is $q_0$ in $j(\cWdg(\vec E))$ and $r\in \bbP$ that forces
$\bbP$ and $j(\cWdg(\vec E))$ are forcing equivalent. By the elementarity, 
this is true in $M$ for $\bbP$ and $\cWdg(\vec E)$. 
\end{proof}

Here is yet another variation of genericity iterations, also due to Woodin. 
Its proof  was  sketched in the appendix to  \cite{NeZa:ProperA} (not included in the published version  
\cite{NeZa:Proper}).

\begin{thm} \label{T.NZ} Assume $\bbP$ is a forcing notion of cardinality $\kappa$ and $\dot x$ is 
a $\bbP$-name for a real. Assume $(M,\vec E)$ is $(\kappa^++1)$-iterable
and $\vec E$ witnesses a countable ordinal $\delta$ is a Woodin cardinal in $M$. 
Then there is a (well-founded) iteration $j\colon M\to M^*$ of length $<\kappa^+$ such that 
$\bbP$ forces $\dot x$ is $j(\cWdo(\vec E))$-generic  over~$M^*$. 
\end{thm}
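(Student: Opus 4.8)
The plan is to rerun the genericity iteration from the proof of Theorem~\ref{T1}, this time guided by the name $\dot x$ and the forcing relation $\Vdash_\bbP$ rather than by a fixed real of $V$. As in that proof, it suffices --- by Lemma~\ref{L4} (applied inside the iterate $M^*$, where $j(\delta)$ is Woodin as witnessed by $j(\vec E)$) and Lemma~\ref{L3} --- to produce a well-founded iteration $j\colon M\to M^*$ of length $<\kappa^+$ with $\bbP\Vdash\dot x\models j(\cTdo(\vec E))$ (the theory as computed in $M^*$): for any $\bbP$-generic $G$ the real $\dot x[G]$ will then satisfy $j(\cTdo(\vec E))$, hence by Lemma~\ref{L3} will be $j(\cWdo(\vec E))$-generic over $M^*$.

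I would give player~I, working in $V$, the following strategy for building an $\vec E$-iteration tree with $M_0=M$. At stage $\alpha$, given $j_{0\alpha}\colon M\to M_\alpha$: if $\bbP\Vdash\dot x\models j_{0\alpha}(\cTdo(\vec E))$, stop and output $j_{0\alpha}$; otherwise, since by soundness a real satisfying all the generating axioms of a theory satisfies the whole theory, there are a condition $p\in\bbP$ and one of the sentences $\Psi(\vec\varphi,\kappa,\lambda)$ generating $j_{0\alpha}(\cTdo(\vec E))$ with $p\Vdash_\bbP\dot x\not\models\Psi(\vec\varphi,\kappa,\lambda)$; using a fixed well-ordering of $V$, pick such an instance with $\lambda$ minimal, so that $p\Vdash\dot x\models\bigvee_{\xi<\lambda}\varphi_\xi$ and $p\Vdash\dot x\not\models\bigvee_{\xi<\kappa}\varphi_\xi$, and let player~I play the $\vec\varphi,\lambda$-strong extender $E\in j_{0\alpha}(\vec E)$ with $\crit(j_E)=\kappa$. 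Legality of this move (extenders of strictly increasing strength) is checked exactly as in Theorem~\ref{T1}. Since the strategy is computed in $V$, player~II may answer with his winning strategy for the $(\kappa^++1)$-iteration game, and the run yields well-founded models $M_\xi$ for all $\xi\le\kappa^+$.

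The core of the proof is that the construction halts at some stage $<\kappa^+$. Suppose not; let $\langle T,M_\xi,E_\xi\mid\xi<\kappa^+\rangle$ be the resulting tree and $b$ the cofinal branch picked by~II at stage $\kappa^+$, so the direct limit $M_{\kappa^+}$ of $\langle M_\xi\mid\xi\in b\rangle$ is well-founded. Fix $H\prec H_\theta$ (for a suitably large regular $\theta$) with $|H|=\kappa$, $\kappa\cup\{\kappa\}\subseteq H$, containing everything relevant --- $M$, $\vec E$, $\bbP$, $\dot x$, the iteration tree, $b$ --- and taken from a continuous increasing $\kappa$-chain of such submodels, so that $\alpha:=H\cap\kappa^+$ is an ordinal $<\kappa^+$. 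As in Theorem~\ref{T1}, $b\cap\alpha$ is cofinal in $\alpha$, and Lemma~\ref{L.branch} --- whose proof uses only the tree-order axioms, Lemma~\ref{L.no-overlaps} and the elementarity of $H$, and so goes through unchanged for trees of length $\kappa^+$ --- applies at $\alpha$, giving the agreements among $\pi^{-1}$, $j_{\alpha\kappa^+}$ and the extender $E_{\alpha'}$ applied to $M_\alpha$ along $b$. By minimality of $\lambda$ and elementarity of $H$, exactly as in Theorem~\ref{T1}, the extender played at stage $\alpha$ witnesses that the associated $\vec\varphi$ reflects to $\alpha$ in $M_\alpha$ with $p\Vdash\dot x\models\bigvee_{\xi<\lambda}\varphi_\xi$ but $p\Vdash\dot x\not\models\bigvee_{\xi<\alpha}\varphi_\xi$; by Lemma~\ref{L.reflects} the formula $\bigvee_{\xi<\alpha}\varphi_\xi$ lies in $V_{\alpha+1}$ and hence in $\bar H$, and the endgame of the proof of Theorem~\ref{T1}, carrying the condition $p$ through every satisfaction statement, forces $p\Vdash\dot x\models\bigvee_{\xi<\alpha}\varphi_\xi$ --- a contradiction. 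Therefore the construction halts at some $\alpha_0<\kappa^+$, and $j=j_{0\alpha_0}$, $M^*=M_{\alpha_0}$ are as required.

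The only genuinely new features over Theorem~\ref{T1} are two bookkeeping points: each failure is now witnessed merely by a condition $p$ rather than outright --- harmless, since the concluding contradiction is itself at the level of a single $p$ --- and the length bound $\omega_1$ becomes $\kappa^+$, forcing us to take $|H|=\kappa$. I expect the main obstacle to be exactly the verification that Lemma~\ref{L.branch} and the condensation step identifying the reflection point of the stage-$\alpha$ extender with $\alpha=H\cap\kappa^+$ survive the passage to iteration trees of length $\kappa^+$; this should be routine, since nothing in those arguments depends on the length of the tree.
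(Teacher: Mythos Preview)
Your proposal is correct and matches the paper's own (sketched) argument: the paper does not give a full proof but notes that one runs the construction of Theorem~\ref{T1} and shows termination before stage $\kappa^+$ by taking an elementary submodel $H$ of cardinality $\kappa$ together with the corresponding analogue of Lemma~\ref{L.branch}, which is precisely your plan. Your explicit treatment of the forcing relation---choosing at each stage a condition $p$ witnessing the failure and carrying that $p$ through the endgame---fills in a detail the paper leaves to the reader; note also that the chain-of-submodels device you invoke is not strictly needed, since $\kappa+1\subseteq H$ already forces $H\cap\kappa^+$ to be an ordinal.
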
 

\begin{proof}[Proof of Theorem~\ref{T.NZ}]  has similar structure as the proof of Theorem~\ref{T1}. 
More precisely, \playerI\ and \playerII\ play the iteration game in which \playerII\ follows his winning strategy while \playerI\ at each move chooses a bad extender with the least possible strength. In the present situation an extender is bad if some condition in $\bbP$ forces that $\dot x$ violates an axiom associated with this extender. 
The game stops when $\bbP$ forces that $\dot x$ satisfies the theory corresponding to $\vec E$ and therefore 
that it is $j(\cWdo(\vec E))$-generic over the iterate $M^*$.

Showing that the construction  terminates before the $\kappa^+$-th stage requires
taking the elementary submodel $H$ of cardinality $\kappa$ and an appropriate 
analogue of Lemma~\ref{L.branch}. 
\end{proof} 

In \cite{NeZa:Proper} it was proved that 
if the forcing~$\bbP$ is proper then one can find a countable genericity iteration. The authors of
\cite{NeZa:Proper} 
also pointed out that this is not necessarily true when $\bbP$ is only assumed to be semiproper.

\subsection{Genericity iterations for subsets of $\omega_1$}

We finally turn to applications of the algebra $\Wdd$ with $\delta$ generators.

\begin{thm} \lbl{T1.1}Assume $(M,\vec E)$ is $(\omega_1+1)$-iterable
and $\vec E$ witnesses a countable ordinal $\delta$ is a Woodin cardinal in $M$. 
Then for every $x\subseteq \omega_1$ there is a (well-founded) 
countable iteration $j\colon M\to M^*$ such that 
$x\cap j(\delta)$ is $j(\cWdd(\vec E))$-generic  over~$M^*$. 

Furthermore, we can assure $j(\delta)\in \bfC$ for any club $\bfC\subseteq \omega_1$ 
given in advance. 
\end{thm} 

\begin{proof}  
The strategy for \playerI\ is   identical to the strategy used in the 
 proof of Theorem~\ref{T1}, and the proof of the latter shows that the iteration 
 terminates at some countable stage at which 
 $x\cap j(\delta)\models j(\cTdd(\vec E))$. 

Now we show how to  assure $j(\delta)\in \bfC$. 
Let $c_\alpha\subseteq \omega$ be a real coding the $\alpha$-th element of $\bfC$
and let $\overline \bfC$ be a subset of $\omega_1$ such that its intersection with the interval 
$[\omega\alpha,\omega(\alpha+1))$ codes
$c_\alpha$. Identify  $x$ with a set of even ordinals and $\overline\bfC$ with a set of odd ordinals
and let $y$ be the result.  
By applying the first part of this theorem to $y$ find an iteration $j\colon M\to M^*$ such that 
$x\cap j(\delta)$ and $\overline\bfC\cap j(\delta)$ are both $j(\cWdd(\vec E))$-generic over $M^*$. 
Let $G\subseteq [j(\cWdd(\vec E))$ be the generic filter defined by $x\cap j(\delta)$ and $\overline\bfC\cap j(\delta)$. 
Since $c_\xi\in M^*[G]$ for all $\xi<j(\delta)$, we have that in $M^*[G]$ 
all ordinals below $j(\delta)$-th element of $\bfC$ are collapsed
to $\aleph_0$.  
By the $j(\delta)$-cc of $j(\cWdd(\vec E))$, $j(\delta)=\aleph_1^{M^*[G]}$. 
Assume $\bfC\cap j(\delta)$ was bounded and pick $\xi$ such that $\sup\bfC\cap j(\delta) <\xi<j(\delta)$. 
Then $c_\xi$ codes an element of $\bfC$ greater than $j(\delta)$. But this means that $j(\delta)$ is collapsed
to $\aleph_0$ in $M^*[G]$, contradicting the above.   
\end{proof}

The following theorem is an analogue of Theorem~\ref{T1} for an arbitrary set of ordinals
and it will be used in the proof of Theorem~\ref{T.M_1}. 
    More useful versions 
    (Theorem~\ref{T.WlimitW} and Theorem~\ref{T1.delta}) 
    require  stronger large cardinal assumptions.

\begin{thm} \label{T1+} Assume $(M,\vec E)$ is  fully iterable
and $\vec E$ witnesses a countable ordinal $\delta$ is a Woodin cardinal in $M$. 
Then for  every set of ordinals 
$x$ there is a (well-founded)  iteration $j\colon M\to M^*$ of length $<|x|^+$ such that 
$x$ is $j(\cWdd(\vec E))$-generic  over~$M^*$. 
\end{thm}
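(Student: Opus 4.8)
The plan is to mimic the proof of Theorem~\ref{T1} almost verbatim, replacing $\omega_1$ by $|x|^+$ throughout and $\cWdo$ by $\cWdd$. First I would set $\mu=|x|$ and identify $x$ with a subset of the cardinal $\mu$; the conclusion to be arranged, via Lemma~\ref{L4} and Lemma~\ref{L3}, is that at some stage of the iteration $x\cap j(\delta)\models j(\cTdd(\vec E))$ — note that, once $j(\delta)>\mu$, which is automatic as soon as $\crit(j)$ exceeds $\mu$ or after one application of any extender, we may as well talk about $x$ itself rather than $x\cap j(\delta)$. Player~I's strategy is identical: given $\langle T,M_\xi,E_\xi\mid \xi\leq\alpha\rangle$, if $x\models j_{0\alpha}(\cTdd(\vec E))$ we stop; otherwise pick the least $\lambda$ for which some $\vec\varphi,\lambda$-strong $E\in j_{0\alpha}(\vec E)$ with $\crit(E)=\kappa$ witnesses $x\not\models\Psi(\vec\varphi,\kappa,\lambda)$, so that $x\not\models\bigvee_{\xi<\kappa}\varphi_\xi$ while $x\models\bigvee_{\xi<\lambda}\varphi_\xi$, and have I play $E_\alpha=E$.

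The heart of the argument is again the claim that, playing against II's winning strategy (available since $M$ is fully iterable, hence in particular $(|x|^++1)$-iterable), this process terminates before stage $|x|^+$. Suppose not; then we obtain an iteration tree $\langle T,M_\xi,E_\xi\mid\xi<|x|^+\rangle$ with a cofinal branch $b$ such that $M_{|x|^+}$ is well-founded. Here is where I would replace the countable elementary submodel $H\prec H_{(2^{\aleph_1})^+}$ of Lemma~\ref{L.branch} by an elementary submodel $H\prec H_{(2^{2^\mu})^+}$ of cardinality $\mu$ containing $x$, $M$, $\vec E$, the iteration tree, the branch $b$, and (crucially) $\mu$ itself with $\mu\subseteq H$, chosen so that $\alpha:=H\cap |x|^+\in b$ — such $H$ exists because $b$ is cofinal in $|x|^+$ and one can take an increasing continuous chain of such submodels and use that $|x|^+$ is regular. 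With $\bar H$ the transitive collapse and $\pi\colon H\to\bar H$ the collapse map, the analogue of Lemma~\ref{L.branch} goes through with the same proof (Lemma~\ref{L.no-overlaps} gives $\crit$ of the tail embeddings at least $\gamma$, the strength of the extender $E_{\alpha'}$ applied to $M_\alpha$ along $b$, and the minimality of $\lambda$ at stage $\alpha$ forces strength of $E_{\alpha'}$ at least that of $E_\alpha$). Then exactly as before: $\pi^{-1}$ agrees with $j_{\alpha|x|^+}$ on $M_\alpha\cap\bar H$, and $j_E$ agrees with $j_{\alpha|x|^+}$ on $M_\alpha\cap V_\lambda$; since $x\subseteq\mu\subseteq H$, the formulas $\varphi_\eta$ for $\eta<\alpha$ lie in $\bar H\cap M_\alpha$ (by Lemma~\ref{L.reflects} they lie in $M_\alpha\cap V_\alpha$, and $V_{\alpha+1}$-elements are preserved), and $j_{\alpha|x|^+}(\vec\varphi)$ implies $\bigvee_{\xi<\lambda}\varphi_\xi$. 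Since $x\models\bigvee_{\xi<\lambda}\varphi_\xi$ and this holds in $H$, elementarity yields $x\models\varphi_\xi$ for some $\xi\in H\cap|x|^+=\alpha$, so $x\models\bigvee_{\xi<\alpha}\varphi_\xi$, contradicting the choice at stage $\alpha$.

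The main obstacle, I expect, is purely bookkeeping around the cardinal arithmetic of the elementary submodel: one needs $H$ of size exactly $\mu=|x|$ with $x\subseteq H$ (so that reflecting a disjunction indexed below $\alpha$ actually captures a witness for $x$), $\alpha=H\cap|x|^+\in b$, and $H$ sitting inside a large enough $H_\theta$ — here $\theta=(2^{2^\mu})^+$ should suffice, matching the jump from $(2^{\aleph_1})^+$ in Lemma~\ref{L.branch} — all simultaneously. Achieving this requires the standard trick of building a continuous increasing $\subseteq$-chain $\langle H_\nu\mid\nu<|x|^+\rangle$ of elementary submodels of size $\mu$ with $\mu\cup\{x,\dots\}\subseteq H_0$ and $H_\nu\cap|x|^+$ strictly increasing; the set of $\nu$ with $H_\nu\cap|x|^+=\nu$ is club in $|x|^+$, and since $b$ is cofinal (indeed, after closing off, club-many points of $b$ are of this form under (4) of the tree-order definition), one picks $\alpha$ in the intersection. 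Everything else is a word-for-word transcription of the proof of Theorem~\ref{T1}, so I would simply remark that the proof is identical to that of Theorem~\ref{T1} with $\omega_1$ replaced by $|x|^+$ and $H_{(2^{\aleph_1})^+}$ by $H_{(2^{2^{|x|}})^+}$, and the direct-limit/branch analysis of Lemma~\ref{L.branch} carried out at the cardinal $|x|^+$ instead of $\omega_1$.
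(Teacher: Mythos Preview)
Your termination argument is fine and matches the paper's, but there is a real gap earlier: your throwaway remark that ``$j(\delta)>\mu$ \dots\ is automatic as soon as $\crit(j)$ exceeds $\mu$ or after one application of any extender'' is false, and without it your iteration may stop with only $x\cap j(\delta)$ generic rather than $x$ itself. Since $\delta$ is countable in $V$ and every extender in $\vec E$ lies in $V_\delta^M$ (a countable set in $V$), the ordinal $j_E(\delta)$ produced by a single ultrapower is still countable; indeed any iterate of length $<\omega_1$ keeps the image of $\delta$ countable. So if $\mu=|x|$ is uncountable, nothing in your strategy forces $j_{0\alpha}(\delta)$ past $\mu$, and it is entirely possible that at some stage $x\cap j_{0\alpha}(\delta)$ already satisfies $j_{0\alpha}(\cTdd(\vec E))$ while $x\not\subseteq j_{0\alpha}(\delta)$ --- at which point you stop and have proved only the analogue of Theorem~\ref{T1.1}, not Theorem~\ref{T1+}.

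The paper handles this with a preliminary step you are missing: before running the genericity iteration, pick an extender $E_0\in\vec E$ of \emph{minimal strength} and linearly iterate it (and its images) to obtain $j_0\colon M\to M_0$ with $x\subseteq j_0(\delta)$. This is a linear internal iteration, hence automatically well-founded, and has length $<|x|^+$. The minimality of the strength of $E_0$ is not decorative: the rules of the iteration game require the strengths of the $E_\alpha$ to be strictly increasing, so one needs every extender in $j_0(\vec E)$ to have strength exceeding that of all the iterates of $E_0$ already used --- which is exactly what minimality of $E_0$ guarantees. Only then does one run your (correct) genericity strategy from $M_0$, and the termination argument via a size-$\mu$ elementary $H\prec H_\theta$ with $H\cap|x|^+\in b$ goes through as you describe.
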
 

\begin{proof} Let $E_0\in \vec E$ be an extender with minimal strength. 
Iterate~$E_0$ to obtain an iteration $j_0\colon M\to M_0$ such that $x\subseteq j(\delta)$. 
Since this is a linear iteration, it is well-founded. By the minimality, the strength of every 
extender in $j(\vec E)$ is greater than the strength of all iterates of $E_0$ used 
in~$j_0$. Therefore for every iteration $i\colon M_0\to M^*$ of $(M_0,j_0(\vec E))$ 
we have that $i\circ j_0\colon M\to M^*$ is an iteration of $(M,\vec E)$, and therefore 
$(M_0,j_0(\vec E))$ is fully iterable. 

From this point on the proof is analogous to the proof of Theorem~\ref{T1}. 
Define a strategy for \playerI\ for building an iteration 
tree starting with 
 $M_0$ as follows. Assume $\langle T, M_\xi, E_\xi\mid \xi\leq\alpha\rangle$ has been 
 constructed. 
If $x\models (j_{0\alpha}\circ j_0)(\cTdd(\vec E))$ then $j_{0\alpha}\circ j_0
\colon M\to M_\alpha$ is the required iteration and we stop. 
Otherwise, let $\lambda$ be the minimal cardinal 
such that there are $\vec \varphi$,  $\kappa$, 
and a $\vec \varphi,\lambda$-strong extender $E\in (j_{0\alpha}\circ j)(\vec E)$ 
with $\crit(E)=\kappa$ 
such that $x\not\models \Psi(\vec\varphi,\kappa,\lambda)$. 
 Fix  such $\vec \varphi$, $\lambda$ and $E$. We have 
 $x\not\models \bigvee_{\xi<\kappa} \varphi_\xi$ 
 and $x\models \bigvee_{\xi<\lambda} \varphi_\lambda$. 
 and note that  $\lambda<(j_{0\alpha}\circ j_0)(\delta)$. 
Then  let \playerI\ play $E_\alpha=E$. 
Note that $(j_{E_\alpha}\circ j_0)(\kappa)\geq \lambda$. 

This describes the iteration strategy for \playerI. A proof that if 
 \playerII\ responds with his winning  strategy
then the process of building the iteration tree terminates at some stage before $|x|^+$ 
is identical to the proof of Theorem~\ref{T1}, using the variant  of Lemma~\ref{L.branch}
for an iteration tree of height $|x|^+$.
\end{proof}

The iterability assumption of Theorem~\ref{T.WlimitW}  below is almost the  
 strongest known result of its 
kind. Its consistency modulo large cardinals was
 proved by Neeman \cite{Nee:Inner}. 
 If $\CH$ holds and $\set{x_\xi}{ \xi<\omega_1}$ is an enumeration of
all reals  we write 
\[
\bbR\rs \gamma=\set{x_\xi}{ \xi<\gamma}. 
 \]
 The
extender algebra with $\delta$ generators 
need not satisfy \eqref{I.WlimW.3}
of  the following theorem,  even when it collapses $\delta$ to $\aleph_1$. 

\begin{thm}\label{T.WlimitW}
Assume $(M,\vec E)$ is  $(\omega_1+1)$-iterable
and $\vec E$ witnesses $\delta$ is a Woodin limit of Woodin cardinals in $M$.   
Furthermore assume $\CH$ holds  and $A$ is a set of reals. 
Then there 
are a (well-founded)  iteration $j\colon M\to M^*$ of countable length  and  
 an $M^*$-generic filter $G\subseteq j(\cWdd(\vec E_0))$ such that the following hold:
\begin{enumerate}
\item  \label{I.WlimW.1}  $\bbR^{M^*[G]}=\bbR\rs j(\delta)$,
\item \label{I.WlimW.2}$A\cap (\bbR\rs j(\delta))\in M^*[G]$, and 
\item \label{I.WlimW.3}Every real in $M^*[G]$ is $M^*$-generic  for a forcing of cardinality $<j(\delta)$. 
\pushcounter
\end{enumerate}

\begin{enumerate}
\popcounter
\item\label{I.WlimW.4}  There is $B\subseteq \omega_1$ such that  $N=\LL[M,B\rs\alpha]$ satisfies
 $\bbR^{M^*[G]}=\bbR^N$ 
\pushcounter
\end{enumerate}

\end{thm} 

\begin{proof} 
Let $\Sigma$ be the winning strategy for \playerII\ in the iterarion game for $M,\vec E$. 
It is naturally identified with a set of reals and therefore, using~$\CH$, 
 with a subset of $\omega_1$. 
Fix a subset $B$ of $\omega_1$ so that its intersection with $\omega$ codes $M$
 and $B$ codes $\bbR$, $A$ and $\Sigma$.
 Let $\bfC$ be the club of all $\alpha<\omega_1$ such 
$B\rs \alpha$ is an elementary submodel of $B$. 

Applying Theorem~\ref{T1.1} find an iteration $j\colon M\to M^*$ such that 
(with $\alpha=j(\delta)$) we have that 
$B\cap \alpha$
is $j(\cWdd(\vec E))$-generic over $M^*$ and $\alpha$ is a limit point of club $\bfC$. 
Therefore for cofinally many $\xi<\alpha$ we have $B\rs \xi\prec B$. 
 Let $G$ be the generic filter defined by $B\rs\alpha$. 
Note that \eqref{I.WlimW.2} is immediate.
We shall now prove \eqref{I.WlimW.4}. Note that the converse inclusion is immediate.

Only the direct inclusion requires a proof. Let 
$\langle T, M_\eta, E_\xi\mid \eta\leq\alpha, \xi<\alpha\rangle$
 be the iteration tree constructed during 
the course of finding~$M^*$. Since the iteration strategy of \playerI\ is definable and $\Sigma\rs\alpha$
is an elementary submodel of $\Sigma$, this tree (except possibly its final $\alpha$-branch) can be constructed in $N$.

Now assume $\dot y$ is a nice $j(\cWdd(\vec E))$-name for a real. By $\alpha$-cc there
is a Woodin cardinal $\gamma<\alpha$ in $M^*$ such that all conditions in $\dot y$ use only generators 
less than $\gamma$.     
Let  $\vec E(\gamma)$
be the extenders in $j(\vec E)$ whose strength is below $\gamma$. Then $\vec E(\gamma)$ witnesses Woodinness
of $\gamma$ in $M^*$. Also, $G\rs\gamma$ satisfies all axioms of theory $\cT(\vec E(\gamma))$ in $M^*$. 
Let $\beta<\alpha$ be large enough so that $M_\beta \cap \V_{\gamma+1}=M^*\cap \V_{\gamma+1}$ and
therefore all axioms of $\cT(\vec E(\gamma))$ are satisfied in $M_\beta$.

 Therefore  we have proved 
 \begin{enumerate}
 \popcounter
 \item \label{I.WlimW.5} $G\rs \gamma$ is 
generic for $\bbP=\cW_{\gamma,\gamma}(\vec E_\gamma)$ over both $M_\beta$ and $M^*$.
\pushcounter
\end{enumerate} 
Since all conditions occurring in $\dot y$ belong to $\bbP$, 
 the interpretation of $\dot y$ with respect to the filter in $\bbP$ defined by $G\rs \gamma$ over $M_\beta$  
is definable from $M$ and $G\rs \alpha$ and therefore belongs to $N$. 
Since this interpretation coincides with the interpretation of $\dot y$
with respect to the filter in $j(\cWdd(\vec E))$ defined by~$G$, and since $\dot y$ was an arbitrary name for a real, 
the direct inclusion of \eqref{I.WlimW.4} follows. 

Since $N$ is a definable inner model in $M^*[G]$, \eqref{I.WlimW.1} follows.

It remains to prove~\eqref{I.WlimW.3}. 
 The elementarity 
of $\bbR\rs \alpha$ in $\bbR$ implies that  
 $j(\cWdd(\vec E))$  collapses all cardinals below $\alpha$ to $\aleph_0$. 
By \eqref{I.WlimW.5}   each  $x_\xi$, for $\xi<\gamma$, is generic over $M^*$ 
for a small forcing. Since $\delta$ is a limit of Woodins all reals in~$\bbR\rs \alpha$ are 
generic over $M^*$ for a small forcing notion. 
 \end{proof}

The assumption of the existence of an $(\omega_1+1)$-iterable model with a measurable 
Woodin cardinal used in the following theorem  is presently beyond reach of the  inner model theory. 

\begin{thm} \lbl{T1.delta}Assume $(M,\vec E)$ is $(\omega_1+1)$-iterable and $\vec E$ witnesses
a countable ordinal $\delta$ is a measurable 
Woodin cardinal in $M$. 
Then for every $x\subseteq \omega_1$ there is a (well-founded) $\omega_1$-iteration $j\colon M\to M^*$ such that 
$x$ is $j(\Wdd(\vec E))$-generic  over~$M^*$. 
\end{thm}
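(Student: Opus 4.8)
The plan is to adapt the proof of Theorem~\ref{T1.1}, where we obtained a \emph{countable} genericity iteration for subsets $x\subseteq\omega_1$ but only succeeded in getting $x\cap j(\delta)$ generic. Here the extra hypothesis that $\delta$ is a \emph{measurable} Woodin cardinal in $M$ is exactly what is needed to run a genuinely $\omega_1$-length iteration and capture all of $x$. First I would observe that, letting $\mu$ be the $M$-normal measure on $\delta$ coded in $\vec E$, one may iterate this measure through all countable ordinals, producing a linear (hence well-founded) iteration $i_0\colon M\to M_0$ with $i_0(\delta)$ of cofinality large enough — more precisely, one wants the images $i_{0\xi}(\delta)$, $\xi<\omega_1$, to be cofinal in $i_0(\delta)$, so that $\sup_{\xi<\omega_1} i_{0\xi}(\delta)=i_0(\delta)$ and $i_0(\delta)$ has cofinality $\omega_1$ in $V$. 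Since $|x|\le\aleph_1$, we can moreover arrange $x\subseteq i_0(\delta)$ by iterating long enough. Because the measurable Woodin has been iterated linearly, $(M_0,i_0(\vec E))$ remains $(\omega_1+1)$-iterable (any iteration tree on it composes with $i_0$ to give one on $M$), so we may work with $M_0$ from now on.

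The second step is to interleave the genericity iteration of the proof of Theorem~\ref{T1.1} with further applications of the measure. I would run player~I's strategy exactly as in Theorem~\ref{T1}/\ref{T1.1}: at stage $\alpha$, if $x\models (j_{0\alpha}\circ i_0)(\cTdd(\vec E))$ we stop; otherwise find minimal $\lambda$ and a $\vec\varphi,\lambda$-strong extender $E\in(j_{0\alpha}\circ i_0)(\vec E)$ with $x\not\models\Psi(\vec\varphi,\kappa,\lambda)$ and play $E_\alpha=E$. The point of having started from $M_0$, where $\delta$'s image has cofinality $\omega_1$ and $x\subseteq i_0(\delta)$, is that the only obstruction to $x\models j(\cTdd(\vec E))$ that could survive is one involving $\vec\varphi$ reflecting to a critical point at or above the current image of $\delta$ — but such $\vec\varphi$ cannot be formed in $\CL_{j(\delta),j(\delta)}$, so in fact every potential violation occurs at a bounded stage. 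Then by Lemma~\ref{L3} and Lemma~\ref{L4} (applied to $j(\cWdd(\vec E))$ in $M^*$, using that the $\delta$-chain condition survives since these are reflecting-formula axioms), $x\models j(\cTdd(\vec E))$ gives $x$ generic over $M^*$.

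The third step — and the main obstacle — is the termination argument: one must show the process halts at a \emph{countable} stage, or rather that after at most $\omega_1$ steps (combining the preparatory $i_0$ and the genericity phase) it halts. This is where the variant of Lemma~\ref{L.branch} for iteration trees of height $\omega_1$ comes in: fix a countable $H\prec H_{(2^{\aleph_1})^+}$ containing everything relevant, put $\alpha=H\cap\omega_1$, let $\bar H$ be the transitive collapse with inverse $\pi^{-1}$, and argue as in Theorem~\ref{T1} that the extender $E_\alpha$ chosen at stage $\alpha$ is $\vec\varphi,\lambda$-strong for some $\vec\varphi$ reflecting to $\alpha$ with $x\not\models\bigvee_{\xi<\alpha}\varphi_\xi$ but $x\models\bigvee_{\xi<\lambda}\varphi_\xi$. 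By Lemma~\ref{L.reflects} the $\varphi_\eta$ for $\eta<\alpha$ lie in $M_\alpha\cap V_\alpha$, hence $\bigvee_{\eta<\alpha}\varphi_\eta\in V_{\alpha+1}$; by clauses \eqref{L.branch.0}, \eqref{L.branch.3} and \eqref{L.branch.4} of Lemma~\ref{L.branch}, $\pi^{-1}$ agrees with $j_{\alpha\omega_1}$ on $M_\alpha\cap\bar H$ and with $j_E$ on $M_\alpha\cap V_\lambda$, so $j_{\alpha\omega_1}(\vec\varphi)$ implies $\bigvee_{\xi<\lambda}\varphi_\xi$; reflecting the statement $x\models(j_{\alpha\omega_1}\circ\cdots)(\vec\varphi)$ back through $\pi$ into $H$ yields $x\models\varphi_\xi$ for some $\xi<\alpha$, i.e. $x\models\bigvee_{\xi<\alpha}\varphi_\xi$, a contradiction. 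The delicate point to verify carefully is that the height-$\omega_1$ version of Lemma~\ref{L.branch}, in particular clause \eqref{L.branch.4}, still holds when the branch $b$ passes through limit stages of the interleaved construction, and that the preparatory measure-iteration $i_0$ does not interfere with the agreement-of-embeddings computations; I expect this to work because the iterated-measure part of the branch has all critical points equal to images of $\delta>\alpha$ once we are past the genericity phase, but the bookkeeping between the two phases is the part requiring genuine care.
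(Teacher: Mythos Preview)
Your two-phase plan has a genuine gap: it outruns the iterability hypothesis.  Phase~1 already consumes $\omega_1$ steps of iteration (the linear measure-iteration $i_0\colon M\to M_0$).  Your termination argument for phase~2 is by contradiction: assume the genericity iteration on $M_0$ runs for $\omega_1$ steps without halting, then invoke Lemma~\ref{L.branch} on a well-founded $M_{\omega_1}$.  But to even reach that well-founded $M_{\omega_1}$ for phase~2 you need $(M_0,i_0(\vec E))$ to be $(\omega_1+1)$-iterable, and the composition with $i_0$ that you propose would yield a tree on $M$ of length $\omega_1+\omega_1$, far beyond the $(\omega_1+1)$-iterability you are given.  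So the contradiction step cannot be set up.  (Your claim in step~2 that ``every potential violation occurs at a bounded stage'' is not justified either: the sequences $\vec\varphi$ live in $\CL_{j(\delta),j(\delta)}$ of the \emph{current} model, whose $\delta$-image keeps growing, so there is no a priori bound on where violations appear.)

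The paper avoids this by \emph{interleaving} the two kinds of moves inside a single tree of length $\omega_1$ rather than doing them in sequence.  Player~I's strategy is: at stage $\alpha$, if some axiom $\Psi(\vec\varphi,\kappa,\lambda)$ fails for $x$, play the appropriate extender (exactly as in Theorem~\ref{T1.1}); if instead $x\models j_{0\alpha}(\cTdd(\vec E))$ already, play the normal measure on $j_{0\alpha}(\delta)$ once and continue.  Theorem~\ref{T1.1} guarantees that the genericity sub-phases each terminate after countably many steps, so the set $C$ of stages $\alpha$ with $\alpha=j_{0\alpha}(\delta)$ and $x\cap\alpha\models j_{0\alpha}(\cTdd(\vec E))$ is a club in $\omega_1$.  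By Lemma~\ref{L.no-overlaps} the critical points after any $\alpha\in C$ stay $\geq j_{0\alpha}(\delta)=\alpha$, so the satisfied initial segments cohere and $x\models j_{0\omega_1}(\cTdd(\vec E))$.  Everything happens inside one $(\omega_1+1)$-tree, which is exactly what the hypothesis provides.
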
 

\begin{proof} This  proof is an extension of the proof of Theorem~\ref{T1.1}. 
We need to assure $x\models j(\cTdd(\vec E))$ for an $\omega_1$-iteration $j$. 
Define a strategy of \playerI\ for building an iteration 
tree starting with $M_0=M$ as follows. Assume $\langle T, 
M_\eta, E_\xi\mid \eta\leq\alpha,\xi<\alpha\rangle$ has been constructed. 

(a) Assume there is a sequence $\vec \varphi$ in $M_\alpha$ that reflects to some $\kappa$ but 
$x\not\models\Psi(\vec\varphi,\kappa,\lambda)$ for some $\lambda$ satisfying 
$\lambda<j_{0\alpha}(\delta)$  and
 $\lambda>\sup_{\xi<\alpha}(\lambda_{E_\xi})$. 
Choose the minimal $\lambda$ with this property, fix the 
appropriate~$\vec \varphi$ and $\kappa$ so 
that $x\not\models\Psi(\vec\varphi,\kappa,\lambda)$.
Then let~$E_\alpha$ be a $(\vec\varphi,\lambda)$-strong  extender in $M_\alpha$ such that $\crit(E_\alpha)=\kappa$. 
Note that $j_{E_\alpha}(\kappa)\geq \lambda$.

(b) Now assume $\alpha$ is countable  and $x\models j_{0\alpha}(\cTdd(\vec E))$. 
Since $\delta$ is a measurable Woodin cardinal, use  normal measure $U$ on 
$j_{0\alpha}(\delta)$ to define $M_{\alpha+1}=\Ult(M_\alpha,U)$ and 
let $j_{\alpha,\alpha+1}:M_\alpha\to M_{\alpha+1}$.

This describes the iteration strategy for \playerI. Now consider the iteration tree formed when \playerII\ plays
his winning strategy against the iteration strategy just defined for player I. 
 By  Theorem~\ref{T1.1}, the set $C$ of all stages $\alpha$ in the iteration  
  such that   $x\models j_{0\alpha}(\cTdd(\vec E))$ and  
 $\alpha=j_{0\alpha}(\delta)$ is unbounded, and it is therefore a club.  
Note that   $x\cap \alpha$ is $j_{0\alpha}(\Wdd(\vec E))$-generic for all $\alpha\in C$. 

By the choice of extenders and Lemma~\ref{L.no-overlaps},  for $\alpha\in C$ the critical points 
of embeddings constructed after $\alpha$th stage will never drop
below $j_{0\alpha}(\delta)$. 

The critical sequence defines a club in $\omega_1$ 
and  an $\omega_1$-iteration $\langle N_\xi, j_{\xi\eta}\mid \xi\leq \eta\leq \omega_1\rangle$
such that $N_0=M$, for $\xi<\eta$ the embedding 
$j_{\xi\eta}\colon N_\xi\to N_\eta$ has~$\alpha_\xi$ as its critical point, 
and $x\cap \alpha_\xi\models j_{0\xi}(\cTdd(\vec E))$ for 
all $\xi$. Since the critical points are increasing, this implies that $x\models j_{0\omega_1}(\cTdd(\vec E))$, 
as required. 
\end{proof}

 \section{Absoluteness} By `$M_1$ exists' 
 we denote the statement `There exists an inner model 
of the form $\LL[\vec E]$  with a Woodin cardinal whose Woodinness is witnessed by extenders in $\vec E$.'
Similarly, if $a$ is a real then `$M_1(a)$ exists'
denotes the statement
`There exists an inner model 
of the form $\LL[\vec E]$ with a Woodin cardinal  whose Woodinness is witnessed by extenders in $\vec E$ and which contains $a$.'
Variants such as `$M_n$ exists' or `$M_1$ exists and it is fully iterable' are interpreted similarly. 
The following lemma and its  variations  will be used tacitly in order to furnish the assumptions of genericity iteration theorems. 

\begin{lem} Assume $M_1$ exists and it is fully iterable. Then there exists a fully iterable inner  
model  with a Woodin cardinal $\delta$ such that $\delta$ is a countable ordinal in $V$. 
\end{lem} 

\begin{proof} Take a countable elementary submodel $N$ of $M_1$. Theorem~\ref{T.M_1}
 implies that $N$ has a sharp. We can therefore find an iteration of $N$ of length Ord  whose resulting model is an 
inner model with a Woodin cardinal which is a countable ordinal in $V$. The iteration strategy for $N$ 
is obtained by copying the iteration strategy of $M_1$ (see \cite{Sar:Short}). 
\end{proof}

 \label{S.Ab}
 \subsection{Absoluteness in $\LL(\bbR)$}
The following result and its proof are a prototype for the main result of this note, Theorem~\ref{T.0}. 

\begin{thm}\label{T.00} 
 Assume  $M_1(a)$ is fully iterable in all forcing extensions for all $a\in \bbR$. 
 Then all $ \bfSigma^1_3$ statements are forcing absolute. 
\end{thm} 

\begin{proof} 
A $\bfSigma^1_3$ statement $\varphi(a)$ with parameter $a\in \bbR$ 
is of the form 
$(\exists x)\psi(x,a)$ where $\psi$ is  ${\Pi^1_2}$. 
To $\varphi$ we associate 
a sentence $\varphi^*$ of $M_1(a)$ (see Theorem~\ref{T.MS}) 
stating that there exists a forcing notion $\bbP$ forcing $\varphi$. 
We claim that $\varphi$ holds (in $V$) if and only if  $\varphi^*$ holds in $M_1(a)$. 
This will suffice since the iterability  of $M_1(a)$ is 
not changed by forcing. 
If $\varphi^*$ holds in $M_1(a)$ then since $M_1(a)\cap \V_{\delta+1}$ 
is countable we can find an $M_1(a)$-generic filter $G\subseteq \bbP$ for $\bbP$ referred to in $\varphi^*$. 
If $M_1(a)[G]\models \psi(x,a)$ then by Shoenfield's 
absoluteness theorem $\psi(x,a)$ holds in $V$. 
For the converse implication, assume $\psi(x,a)$ holds in some forcing extension 
of $V$ for some $x\in \mathbb R$. Since  $M_1(a)$ is fully iterable in all forcing extensions, 
apply 
 Theorem~\ref{T1} to find   a countable  iteration 
  $j\colon M_1(a)\to M^*$ such that~$x$ is generic 
over~$M^*$. Then (again using Shoenfield) $M^*\models \varphi^*$, and by elementarity $M_1(a)\models \varphi^*$. 
\end{proof}

The assumptions of Theorem~\ref{T.00} are far from optimal. By a result 
of Martin and Solovay (\cite{martin69basistheorem}), if $\kappa$ is a measurable cardinal
then $\bfSigma^1_3$ sentences are forcing absolute 
for forcing notions in $\V_\kappa$. As a matter of fact, all $\bfSigma^1_3$ sentences 
are absolute between all forcing extensions of $V$ if and only if all sets have sharps 
(Martin--Steel, Woodin; see \eg, \cite{steel06innermodel} for terminology). 
One important fact about Theorem~\ref{T.00} and
its extension, Theorem~\ref{T.00n} below, is that
its proof  is susceptible to far-reaching generalizations. 
Also,   Corollary~\ref{C.n} below is worth mentioning. In the language of 
Woodin's $\Omega$-logic (see \cite[\S 10]{Wo:Pmax2}, \cite{Woo:Omega}, \cite{BaCaLa},  
or \cite{Lar:Three}), it states that if the function $x\mapsto M_1(x)^\#$ is universally Baire then it 
is an $\Omega$-proof for all   true $\bfSigma^1_3$ statements (see \S\ref{S.Oinfty}). 
 As common in inner model theory, by $M_1(x)^\#$ we denote an iterable transitive model of 
ZFC that contains $x$ and has a top measure  (for more on mice see \cite{Schi:ABC}). 

\begin{cor} \label{C.n} Assume $N$ is a transitive model of a large enough fragment of ZFC
that is closed under the map 
$x\mapsto M_1(x)^\#$. Then $N$ is $\bfSigma^1_3$-correct. 
\end{cor} 

\begin{proof} For every real $x$ in $N$,  $N$ can reconstruct $M_1(x)$ and its iteration strategy. 
Therefore the proof of Theorem~\ref{T.00} applies inside $N$. 
\end{proof}


It should be noted that  $M_1$ is not necessarily $\bfSigma^1_3$ correct. 
For example, the fine-structural version of $M_1$ has a $\Delta^1_3$ well-ordering of the reals. 
Similarly, being closed under the sharps (\ie, under the map $x\mapsto x^\#$) 
does not guarantee $\bfSigma^1_3$ correctness since $\LL[\mu]$ has a $\Delta^1_3$ well-ordering of the reals. 
Nevertheless, $M_2$ is $\bfSigma^1_4$ correct   (see \cite{steel95projectivelywellordered}). 
Here we shall only show that a proof analogous to the above gives a weaker result.

\begin{thm} \label{T.00n} Assume $M_n(a)$ is fully iterable in all forcing extensions 
for every $a\in \bbR$. 
Then all $\bfSigma^1_{n+2}$ statements are forcing absolute. 
\end{thm} 

\begin{proof} 
We first prove the case $n=2$. 
A $\bfSigma^1_{4}$ sentence $\varphi$
has the form $(\exists x)(\forall y) \psi(x,y,a)$ for some real parameter 
$a$ and a $\Pi^1_2$ formula $\psi$. 
Let $M_2(a)$ be the minimal model for two Woodin cardinals containing~$a$  
fully iterable in all forcing extensions (Theorem~\ref{T.MS}), 
and let $\delta_0<\delta_1$ be its Woodin cardinals.

To $\varphi$ associate a sentence $\varphi^*$ stating that there is a forcing $\bbP$ in $\V_{\delta_0+1}$
and a $\bbP$-name $\dot x$ for a real 
such that for every forcing $\dot \bbQ\in \V_{\delta_1+1}$ and a $\dot\bbQ$-name~$\dot y$ for a real 
we have 
\[
\forces_{\bbP*\dot\bbQ} \psi(\dot x,\dot y, a). 
\]
We claim that $\varphi$ holds in $V$ if and only if $\varphi^*$ holds in  $M_2(a)$. 

Assume $M_2(a)\models \varphi^*$. Find an $M_2(a)$-generic 
$G\subseteq \bbP$ 
and let $x=\Int_G(\dot x)$. Let $y$ be any real. Let $\vec E_1$ be 
the system of extenders witnessing $\delta_1$ is Woodin in $M_2(a)$. We may 
assume $\min\set{\crit(j_E)}{ E\in \vec E_1}>\delta_0$.  
Using the full iterability of $M_2(a)$ and Theorem~\ref{T1.1.1}  
find a well-founded iteration $j\colon M_2(a)[G]\to M^*[G]$ such that $y$ is $j(\cWdo(\vec E_1))$-generic
over~$M^*[G]$. 
Then by elementarity we have 
$M^*[G][y]\models \psi(x,y,a)$ and by Shoenfield's absoluteness theorem 
 $\psi(x,y,a)$ holds. Since $y$ was arbitrary, we have proved $\varphi$ holds in $V$. 

Now assume $\varphi$ holds in $V$ and let $x\in \cP(\omega)$ be such that 
$(\forall y)\psi(x,y,a)$. By Theorem~\ref{T1} we can  
find an iteration $j_0\colon M_2(a)\to M^*$ such that 
$x$ is $j_0(\cWdo(\vec E_0))$-generic over $M^*$. Fix  $y\in \cP(\omega)$. 
By Theorem~\ref{T1.1.1} there is an iteration $j_1\colon M^*[x]\to M^{**}[x]$ such that 
$y$ is $(j_1\circ j_0)(\cWdo(\vec E_1))$-generic over $M^{**}[x]$. 
Then $\V\models \varphi(x,y,a)$ and by the Shoenfield's 
absoluteness theorem $M^{**}[x]\models \varphi(x,y,a)$. Since $y$ was arbitrary, this shows $\varphi^*$ 
holds in~$M_2(a)$. 

This concludes  proof of the theorem in the case when we have two Woodin cardinals. 
In general case, to a $\bfSigma^1_{n+2}$ 
formula $(\exists x_1)(\forall x_2)\dots \psi(x_1,\dots, x_n,a)$ with $\psi$ being~$\bfSigma^1_2$ one associates a 
formula $\varphi^*$ of $M_n(a)$ stating 
\[
(\exists \bbP_1\in \V_{\delta_0+1})(\exists \dot x_1)
(\forall \bbP_2\in \V_{\delta_1+1})(\forall \dot x_2)\dots
\forces_{\bbP_1*\dot \bbP_2* \dots *\dot\bbP_n}\psi(\dot x_1,\dots, x_n,a)
\]
and proves that $\V\models \varphi$ is equivalent to $M_n(a)\models \varphi^*$ as above. 
\end{proof}


\begin{rem} \label{R.1} 
The following was pointed out  by Menachem Magidor. 
In Theorem~\ref{T.00n}
it is not sufficient to assume that there are $n$ Woodin cardinals and a measurable above. 
Such an assumption can hold if $V=\LL[\vec E]$ for a system of extenders $\vec E$. 
However a forcing extension of $\LL[\vec E]$ in which a large enough cardinal is collapsed to $\aleph_0$ 
is of the form $\LL[x]$, for a real~$x$. Such an extension satisfies the projective statement
`there exist a real~$x$ and a $\Delta^1_2(x)$ well-ordering of $\bbR$.' 

However, the existence of a proper class of Woodin cardinals gives such an absoluteness, and more (see \cite{Lar:Stationary}). 
\end{rem}

By Remark~\ref{R.1} the existence of $M_n(a)$ is a strictly weaker assumption than its  iterability 
in all forcing extensions, needed in Theorem~\ref{T.00}. 
 We write~$M_n$ for $M_n(0)$. Recall that a set of ordinals $X$ \markdef{has a sharp} 
 if there is a nontrivial elementary embedding of $\LL[X]$ into itself. 

The following result is also due to Woodin. 

\begin{thm} \label{T.M_1} The following are equivalent. 
\begin{enumerate}
\item $M_1$ exists and it is fully iterable in all forcing extensions. 
\item Every set has a sharp and there is a proper class model with a Woodin cardinal. 
\end{enumerate}
\end{thm}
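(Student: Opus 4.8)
The plan is to prove the two implications separately. For $(1)\Rightarrow(2)$, I would first observe that a proper class model with a Woodin cardinal is immediate from the hypothesis: if $M_1$ exists then $M_1$ itself is a proper class inner model with a Woodin cardinal, and it is not affected by forcing. The substance is the ``every set has a sharp'' clause. Here I would use the genericity iteration machinery already developed: given a set of ordinals $X$, apply Theorem~\ref{T1+} (using that $M_1$ is \emph{fully} iterable, hence $M_1$ is fully iterable in particular at the relevant length $|X|^+$) to find a well-founded iteration $j\colon M_1\to M^*$ of length $<|X|^+$ such that $X$ is $j(\cWdd(\vec E))$-generic over $M^*$. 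Then $X$ lives in the generic extension $M^*[X]$, and the internal iterability of $M^*$ (which has a Woodin cardinal $j(\delta)$) survives the small-below-$\crit$ forcing by the Levy--Solovay argument recalled before Theorem~\ref{T1.1.1}. From a nontrivial iterable model with a Woodin cardinal above $X$, one reads off the sharp of $X$: the standard fact (Martin--Steel / fine structure) that an iterable model with a Woodin cardinal computes sharps for every set of ordinals it contains, i.e.\ $X^\#$ exists and in fact $X^\#\in M^*[X]\subseteq V$. This gives the sharp of every set, as required.

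For $(2)\Rightarrow(1)$, I would invoke the core model induction / the Martin--Steel construction in the form: if there is a proper class inner model with a Woodin cardinal and every set has a sharp, then $M_1$ exists and is fully iterable. The point is that ``every set has a sharp'' is exactly the background hypothesis under which the $M_1$-construction (building the minimal iterable proper-class mouse with a Woodin cardinal by a $K^c$-style construction relative to the given model) can be carried out and shown to converge and be iterable; sharps for all sets is a forcing-absolute statement (it is $\Pi^1_2$ over any set parameter and survives and is implied back by forcing, since forcing cannot destroy or create all sharps), and likewise the existence of a proper class model with a Woodin cardinal persists to all forcing extensions because such a model is definable and is not disturbed. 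Hence $M_1$ exists in every forcing extension, and one then argues, using the uniqueness and absoluteness of the minimal such mouse, that the $M_1$ of $V$ is the $M_1$ of every forcing extension and is iterable there. This is the ``folklore'' direction and I would cite \cite{St:Outline} for the construction of $M_1$ and the characterization of its existence in terms of sharps plus a Woodin cardinal inner model.

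The main obstacle I expect is the $(2)\Rightarrow(1)$ direction, specifically the verification of \emph{full iterability in all forcing extensions} rather than mere existence of $M_1$. Existence of $M_1$ from ``sharps for all sets $+$ a Woodin inner model'' is the content of the Martin--Steel--Woodin construction, but turning that into iterability in \emph{every} forcing extension requires the genericity-iteration absoluteness already exploited in this paper together with the fact that iteration strategies for $M_1$ are themselves sufficiently definable (guided by $Q$-structures, which in turn exist because of the sharps hypothesis) to be correctly computed in forcing extensions. I would structure this as: (i) sharps for all sets is forcing-invariant, (ii) the $M_1$-construction relativizes and the resulting mouse is the unique minimal one, hence absolutely definable, (iii) its unique iteration strategy is the ``$Q$-structure-guided'' strategy, which is computed the same way in $V$ and in any $V[G]$, so the $(\omega_1+1)$-iterability in the sense of $V[G]$ holds, and then (iv) full iterability in $V[G]$ follows from $(\omega_1+1)$-iterability there exactly as in the remark following Theorem~\ref{T.MS} (class-many Woodins are not needed once one has the canonical strategy). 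The $(1)\Rightarrow(2)$ direction is comparatively routine given Theorem~\ref{T1+} and the Levy--Solovay observation already isolated in the paper, so I would present it first and in more detail.
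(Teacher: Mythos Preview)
Your argument for $(1)\Rightarrow(2)$ has a genuine gap. You invoke Levy--Solovay to conclude that $M^*[X]$ retains enough large-cardinal structure to produce $X^\#$, but the hypothesis of Levy--Solovay fails here: the forcing $j(\cWdd(\vec E))$ has cardinality $j(\delta)$, and every extender in $j(\vec E)$ has critical point strictly \emph{below} $j(\delta)$. Moreover $M_1$, being the minimal proper-class mouse with one Woodin cardinal, has no measurable cardinal above $\delta$, so $M^*$ has none above $j(\delta)$ either. The forcing is therefore not small relative to any large cardinal of $M^*$, and $M^*[X]$ need contain no measurable cardinal at all, let alone one above $\sup(X)$. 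There is no ``standard fact'' that extracts $X^\#$ from such a model; the claim that an iterable model with a Woodin cardinal computes sharps for all of its sets of ordinals is simply false for $M_1$ itself.

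The paper's route is entirely different: it argues by contradiction through Jensen's Covering Lemma. If some $X$ has no sharp, then covering holds over $L[X]$, so $(\lambda^+)^{L[X]}=\lambda^+$ for a singular cardinal $\lambda$ with $X\in V_\lambda$. Theorem~\ref{T1+} then yields an iteration $j\colon M_1\to M^*$ with $X$ generic over $M^*$ and $j(\delta)<\lambda^+$. Since $L[X]\subseteq M^*[X]$, the extension $M^*[X]$ also computes $\lambda^+$ correctly; but the $j(\delta)$-chain condition of the extender algebra forces $j(\delta)$ to remain a cardinal in $M^*[X]$, and these two facts are incompatible. The Covering Lemma, not Levy--Solovay, is the external input your approach is missing.

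For $(2)\Rightarrow(1)$ the paper does exactly what you do: it explicitly states ``We only prove the implication from (1) to (2)'' and leaves the converse to the inner-model-theoretic literature. Your outline via $Q$-structure-guided strategies is the standard route for that direction and is an appropriate deferral.
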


\begin{proof} (2) implies (1) is a difficult result using Mitchell--Steel constructions and  
we only prove the implication from (1) to (2). 
 We only need to 
 show that the full iterability of $M_1$ implies that every set has a sharp. 
Assume the contrary, and let $X$ be a set without a sharp. 
Then the Covering Lemma holds in $\LL[X]$, hence the successor $\lambda^+$ of some singular
cardinal~$\lambda$ such that $X\in \V_\lambda$ 
is correctly computed in~$\LL[X]$ (\cite{Kana:Book}). 
By Theorem~\ref{T1+} there is an iteration $j\colon M_1\to M^*$ such that 
$X$ is generic over~$M^*$ and $j(\delta)<\lambda^+$. 
Then $M^*[X]$ correctly computes $\lambda^+$, since it includes~$\LL[X]$. 
On the other hand, by the chain condition  of the extender algebra
$j(\delta)$ remains a cardinal in~$M^*[X]$. A contradiction. 
\end{proof} 

I learned the above proof that  (1) implies (2) from Ralf Schindler. 
This proof also shows that if $M_{n+1}$ is fully iterable then for every set $X$ there
is an inner model including $\LL[X]$ with $n$ Woodin cardinals that has a sharp.

\subsection{Absoluteness for $H(\aleph_2)$}
 Theorem~\ref{T.0} below was proved independently by Steel and Woodin 
and a proof of its strengthening due to  Neeman  can be found in \cite{Nee:Determinacy}. 
Theorem~\ref{T.0} implies that the existence of a model with a measurable Woodin cardinal
that  is fully iterable in all forcing extensions 
would provide  another proof of Woodin's $\Sigma^2_1$-absoluteness theorem (\cite{Wo:Sigma-2-1}; see also 
\cite{Doe:Stationary}, \cite{Lar:Stationary}, or 
\cite{Fa:A_proof}). 
The proof of this theorem will use the following standard forcing fact. 
The assumption that $\V_{\delta}\cap M$ is countable is used only to assure the existence
of generic objects. 

\begin{lem}\label{L.Forcing.1}  Assume $M$ is a 
transitive model of a large enough fragment of ZFC such that $M\cap \V_{\delta}$ is countable. 
Assume  $\bbP$ and $\bbQ$ belong to $M\cap \V_\delta$ and 
$\bbP$ is a regular subalgebra of $\bbQ$. If $G\subseteq\bbP$ is $M$-generic, 
then there is an $M$-generic $H\subseteq \bbQ$ such that $G\in M[H]$. \qed
\end{lem}

\begin{lem}\label{L.reg} 
Assume  $\bbP$ is a forcing 
notion in $M$ with $\delta$-cc of cardinality~$\delta$. If $j\colon M\to M^*$ is 
an elementary embedding with $M^*$ a definable class in~$M$ and
$\crit(j)=\delta$, then $\bbP$ is a regular subordering
of $j(\bbP)$ in $M$. 
\end{lem} 

\begin{proof} We may assume $\bbP\subseteq \V_\delta$. 
Let $\cA$ be a maximal antichain in $\bbP$. By the $\delta$-cc we have $\cA\in \V_\delta$, 
and therefore $j(\cA)=\cA$. 
By the elementarity,~$\cA$ is a maximal antichain in $j(\bbP)$ in $M^*$. 
Since being a maximal antichain is absolute, $\cA$ is a maximal antichain of $j(\bbP)$
in $M$. 
\end{proof} 

%
%

\begin{thm} \label{T.0} 
Assume there exists a model $\mwM$
with a countable ordinal~$\delta$ that is a measurable  Woodin cardinal in $\mwM$
which is fully iterable in all forcing extensions.  
Then to every $\Sigma^2_1$ statement $\varphi$ we can associate a statement~$\varphi^*$
such that if $\V\models \varphi$ then $\mwM\models \varphi^*$
and if $\mwM\models \varphi^*$ and $\CH$ holds then~$\V\models \varphi$. 
\end{thm} 

\begin{proof} The sentence $\varphi$ is of the form $(\exists X\subseteq \bbR)\psi(X)$ where $\psi$ is a statement of~$(H(\aleph_1), X, \in)$. 
To it we associate 
 $\varphi^*$ stating that  some condition in~$\Wdd$ forces $\varphi$ and $|\check \delta|=\aleph_1$.

In order to prove that $\varphi$ implies $\mwM\models \varphi^*$, 
assume $X\subseteq \bbR$ is such that~$\psi(X)$ holds. 
Go to a forcing extension of $V$  with the same reals that satisfies $\CH$. 
Fix $Y\subseteq \omega_1$ that codes $X$ and all reals. 
Using Theorem~\ref{T1.delta} find an iteration $j\colon\mwM\to \fM$ of length $\omega_1$
such that $Y$ is $j(\Wdd(\vec E))$-generic over $\fM$. This forcing has $j(\delta)$-chain condition
and it collapses all cardinals below $j(\delta)$ to $\omega$. 
Therefore $\bbR^{\mwM[Y]}=\bbR$, hence
$\mwM[Y]\models \psi(X)$. 

We now assume $\CH$ and $\mwM\models \varphi^*$  and prove $\varphi$. 
In $\mwM$  fix a condition $p$ in~$\Wdd$ and a name $\dot X$ 
such that $p$ forces  $\psi(\dot X)$. 
By using $\CH$ we can enumerate $\bbR$ as  $r_\xi$, for $\xi<\omega_1$.  
In this proof we shall write $M_0$ instead of~$\mwM$. We shall now describe 
an iteration strategy for player~I. Along with the iteration, player~I constructs generic
filters. 
After player I's strategy is described, we shall run it against \playerII's winning strategy 
for the iteration game and argue that the run of the game produces 
 a well-founded iteration $j\colon M_0\to M^*$ and  
 $G\subseteq j(\cWdd(\vec E))$ generic 
 over $M^*$ such that $p=j(p)$ is in $G$ and that $M^*[G]$ contains all reals. 
All extenders $E$ used in player~I's strategy defined below 
will satisfy $\crit(E)\geq \delta$,  therefore
assuring $j(p)=p$.

Since $M_0\cap \V_{\delta+1}$ is countable,  
in $V$ we can find a  $G_0\subseteq \Wdd(\vec E)$ generic over~$M_0$ and containing $p$. 
Now use the extender in $M_0$ with critical point $\delta$ to find $j_0\colon M_0 \to M_1$. 
By Lemma~\ref{L.reg},  $G_0$ can be extended to a generic filter for $j_0(\cWdd(\vec E))$.

We want to find a generic $G_1\subseteq j_0(\Wdd(\vec E))$ 
extending $G_0$ and such that $r_0\in M_1[G_1]$. 
Let $\delta_0$ be the least Woodin cardinal in~$M_1$ greater than $\delta$ whose
Woodinness is witnessed by (an initial segment of) $j_0(\vec E)$.  
Let $\vec E_1$ consist of generators in $j_0(\vec E)$ witnessing Woodinness of $\delta_0$ 
whose critical points exceed $\delta$. 
Now \playerI\ attempts to find an iteration $i_0\colon M_1\to  M_1^*$ 
using the extenders in $\vec E_1$ such that 
$r_0$ is generic over $i_0(\cW_ {\delta_0\omega} (\vec E_1))$. 
(Recall that  \playerII\  continues playing his winning strategy 
for the iteration game corresponding to $\vec E$ from   $\mwM$, and therefore 
 by Theorem~\ref{T1}  after countably many stages we will assure that $r_0$ is generic.)
 Assume \playerI\ has succeeded in finding $i_0$. Since the critical point never drops below $\delta$, this
 is an iteration of $M_1$ resulting in some $M_1^*$. 
 Since $p$ forces that $(i_0\circ j_0)(\Wdd(\vec E))$ collapses $2^{\delta_0}$ to $\aleph_0$, 
 $i_0(\cWdd{\omega} (\vec E_1))$ can be embedded as a 
 regular subalgebra of the former below $p$. 
 We can therefore use Lemma~\ref{L.Forcing.1} to find  a generic $G_1\subseteq
 (i_0\circ j_0)(\Wdd(\vec E))$
including $G_0$  such that $r_0\in (i_0\circ j_0)(M_1)[G_1]$. 

We proceed in this manner. At the $\alpha$th stage we have an iteration $j^0_\alpha\colon M_0\to M_\alpha$
and $G_\alpha$ is a generic filter for $j_\alpha^0 (\Wdd(\vec E)))$.
\PlayerI\ uses the extender in $M_\alpha$ with critical point $j^0_\alpha(\delta)$ to find 
$j^1_\alpha\colon M_\alpha\to M^1_\alpha$. Let $\delta_\alpha$ be the least Woodin cardinal in $M^1_\alpha$
above $j^0_\alpha(\delta)$. Choose a system of extenders 
$\vec E_{\alpha+1}\subseteq (j^1_\alpha\circ j^0_\alpha)(\vec E)$ 
which witnesses $\delta_\alpha$ is Woodin and such that critical points of all extenders in $\vec E_{\alpha+1}$
exceed $j^0_\alpha(\delta)$. 
Using the genericity iteration theorem, \playerI\ finds an iteration $j^2_\alpha$ of $M^1_\alpha$
such that $r_\alpha$ is generic over $j^2_\alpha(\vec E_{\alpha+1})$. Using Lemma~\ref{L.reg} to absorb
this forcing in quotient, 
Find an iteration $j_\alpha^3\colon M^1_\alpha\to M_\alpha^*$ with critical point
 $j_\alpha^0(\delta)$ such that the filter $G_\alpha$ can be extended to a generic ultrafilter $G_{\alpha+1}$ included in $j_\alpha(\cWdd(\vec E))$, 
  with  $j_\alpha=j_\alpha^3\circ j^1_\alpha\circ  j_\alpha^0$.

Fix the least  Woodin cardinal 
in $M_\alpha^*$ above~$j_\alpha^0(\delta)$. 
Using the algebra with $\omega$ generators on this cardinal  
 and Theorem~\ref{T1} 
  find an iteration of $i_\alpha\colon M_\alpha^*\to M_\alpha^{**}$
that makes  $r_\alpha$ generic for an algebra that is a regular subalgebra 
of $(i_\alpha\circ j_\alpha)(\Wdd(\vec E))$.
Again \playerI\ plays only the extenders $E$ with $\crit(E)\geq j_\alpha^0(\delta)$ so the responses
of \playerII\ result in an iteration $i_\alpha\colon M_\alpha^*\to M_\alpha^{**}$, as required. 
By Lemma~\ref{L.reg}, we can  find
 a generic filter $G_{\alpha+1}\subseteq 
(i_\alpha\circ j_\alpha)(\Wdd(\vec E))$ extending $G_\alpha$ 
  such that $r_\alpha$ belongs 
to~$M_\alpha^{**}[G_{\alpha+1}]$. 

At a limit stage of the construction \playerII\ chooses a maximal branch of the iteration 
tree constructed so far. By the $\delta$-cc, every maximal antichain 
of the image of $\Wdd(\vec E)$ in this model belongs to some
earlier model. 
Therefore the direct limit of the $G_\xi$ corresponding to the 
models on the branch is generic. 
 
This describes a  game which  produces an iteration $j\colon M_0\to M^*$ such that 
$j(\delta)=\aleph_1$ and a $G\subseteq j(\Wdd(\vec E))$ generic over $M^*$
and containing $j(p)=p$. 
By the choice of $p$ and elementarity  $\psi(X)$ holds in $M^*[G]$. Since 
the model~$M^*[G]$ also contains all  reals, $\psi(X)$ holds in~$V$. 
\end{proof}

\begin{cor}[Steel, Woodin]
Assume there exists a fully iterable model $\mwM$
with a countable ordinal $\delta$ that is a measurable  Woodin cardinal in $\mwM$.  
Then every $\Sigma^2_1$ statement true in some forcing extension of~$V$ is
true in every forcing extension of~$V$ that satisfies $\CH$. \qed
\end{cor}

\subsection{A fairly complicated set of natural numbers}\label{S.many-one}\label{S.Oinfty}
The following was pointed out by John Steel. 
Recall that if $\kappa$ is a limit of Woodin cardinals then a set is 
$\kappa$-universally Baire 
if and only if it is $<\kappa$-homogeneously Suslin (\eg, \cite[Theorem~3.3.13]{Lar:Stationary}). 
By $\Hom_\infty$ we denote the pointclass of all homogeneously Suslin sets. 
A sentence $\psi$ is $\bfSigma^2_1(\Hom_\infty)$
if there is $A\in \Hom_\infty$ and formula $\phi(X,Y)$, where $X$ and $Y$ are
second-order variables, such that $\psi$ is of the following form: 
\[
(\exists X\in \Hom_\infty)(H(\aleph_1), \in, A, X)\models \phi. 
\]
By a result of Woodin (\cite[Theorem~5.1]{St:Derived}), if there are class many Woodin cardinals
then every $\bfSigma^2_1(\Hom_\infty)$ sentence is absolute between forcing extensions. 
We shall need only the lightface version of this result. 

Let $\cO_\infty$ denote the set of all $\Sigma^2_1(\Hom_\infty)$ truths, with no real or $\Hom_\infty$
parameters. By identifying $\phi$ with its G\"odel number $\godel\phi$ we consider $\cO_\infty$
as a set of natural numbers. By \cite[Theorem~5.1]{St:Derived}, if there exist class many Woodin 
cardinals $\cO_\infty$ is invariant under forcing.

\begin{cor}Assume there exist class many Woodin cardinals. 
 Let~$\Gamma$ be the set of all  $\Sigma^2_1$ sentences $\phi$ that 
hold in forcing extensions that satisfy~$\CH$. 
If there exists a mouse with a measurable Woodin cardinal and $\Hom_\infty$ 
iteration strategy, then $\Gamma$ is many-one reducible to $\cO_\infty$. 
\end{cor} 

\begin{proof} By Theorem~\ref{T.0}, 
truth of a sentence $\phi$ in $\Gamma$ is equivalent to the 
existence of an iterable mouse with a $\Hom_\infty$ iteration strategy
such that ($\phi$ holds in a forcing extension that satisfies $\CH)^M$. 
\end{proof} 

In the presence of class many Woodin cardinals, 
every generic absoluteness result proved using genericity iterations 
along the lines of Theorem~\ref{T.0} or  Theorem~\ref{T.00}
implies many-one reducibility of the relevant set $\Gamma$ to $\cO_\infty$,  
provided there are class many Woodin cardinals and  the  
 relevant mouse has a $\Hom_\infty$ iteration strategy. 
 Proofs of $\Sigma^2_1$ absoluteness  using stationary tower (\cite{Lar:Stationary}) 
 or Levy collapse followed by forcing with a saturated ideal (\cite{Fa:A_proof}) 
 do not seem to produce many-one reduction of the relevant set of sentences
  to $\cO_\infty$. 

Similarly, determinacy proofs produce many-one reductions of game-quantifier
truths to $\cO_\infty$.  For example,   Neeman's \cite{Nee:Determinacy} 
produces $\Hom_\infty$ strategies for open $\omega_1$-games whose payoff set is 
$\bfPi^1_1$ in the codes 
by transforming a $\Hom_\infty$ iteration strategy for the mouse. 
It therefore shows that $\gopen$ truth 
is many-one
reducible to $\cO_\infty$.  

Recall the definition of a proof in Woodin's $\Omega$-logic 
(see \cite[\S 10]{Wo:Pmax2}, \cite{Woo:Omega}, \cite{BaCaLa},  
or \cite{Lar:Three}). One first assumes there are 
class many Woodin cardinals, and therefore a set of reals is universally Baire
if and only if it is homogeneously Suslin. 
If a set of reals $A$ is universally Baire and $M$ is a transitive model of a large enough fragment 
of ZFC then we say $M$ is \markdef{$A$-closed} if for every $\bbP\in M$ and 
every $\bbP$-name $\tau$ for a real such that $\tau\in M$ the 
set 
$\set{p\in \bbP}{ p\forces \tau\in A}$ belongs to $M$. See \cite[Definition~10.141]{Wo:Pmax2}
or \cite[\S 2.2]{BaCaLa} for more details. For production of $A$-closed models see 
Lemma~\ref{L.uB} and Lemma~\ref{L.Omega}. 
An \markdef{$\Omega$-proof} of $\phi$ 
is a universally Baire set~$A$ such that for every 
$A$-closed model $M$ 
of a large enough fragment of ZFC 
we have $M\models \phi$. If $\phi$ has an $\Omega$-proof  then we write $\vdash_\Omega\phi$.  
Consider the set of all theorems of $\Omega$-logic, 
\[
\cO^\Omega=\set{\phi}{ \vdash_\Omega\phi}. 
\]
This  $\Sigma^2_1(\Hom_\infty)$ set is easily seen to be many-one 
equivalent to~$\cO_\infty$. 

Woodin's $\Omega$-conjecture (see \cite{Woo:Omega}, \cite{Lar:Three})
asserts that $\cO^\Omega$ is the set of G\"odel numbers of statements
true in all forcing extensions. 
Therefore $\Omega$-conjecture implies, and is  essentially equivalent to, the assertion 
 that all generic absoluteness comes via many-one reductions 
to $\cO_\infty$. 

\section{Divergent models of $\AD^+$} 
\label{S.Div}

In the present section we prove another previously unpublished
theorem due to  (surprise, surprise) Woodin. 
Recent sweeping results of Grigor Sargsyan added to the importance of this theorem
by putting it in a chain of implications leading to dramatic lowering of 
the consistency strength of the axiom `$\ADR\axplus\Theta$ is regular'
(see \cite{Sar:Short} or \cite[\S 1.6]{Wo:Pmax2} for more details). 
I would like to thank Grigor for suggesting 
that I include this proof and for clarifying a number of details, in particular 
the ones sketched in \S\ref{S.WlimW}. 
 I am indebted to Hugh Woodin for sketching the proof during a train ride 
from Oberwolfach in January 2011. Koellner's note  
 \cite{Koe:Incompatible} was quite helpful in reconstructing this proof. 
 
Two inner models $N_1$ and $N_2$ of $\AD^+$ are said to be \markdef{incompatible} if they have the same
reals but $\cP(\bbR)^{N_1}\not\subseteq \cP(\bbR)^{N_2}$ and $\cP(\bbR)^{N_2}\not\subseteq \cP(\bbR)^{N_1}$. 
 The terminology is justified by the fact that no inner model that includes both $N_1$ and $N_2$ can satisfy Wadge determinacy.

\begin{thm} \label{T.Div} Assume there exists Woodin limit of Woodin cardinals. 
Then in some forcing extension of $V$ there are incompatible $\AD^+$ models. 
\end{thm}

The remainder of this section contains the proof of Theorem~\ref{T.Div}. 

\subsection{Universally Baire sets and absoluteness}
\label{S.Cohen}
A set of reals $A$ is \markdef{$\kappa$-universally Baire} if there are
trees $S$ and $T$ on $\omega\times \lambda$ for some cardinal $\lambda$ 
such that $S$ projects to $A$, $T$ projects to $\bbR\setminus A$, and 
in every forcing extension by $\bbP\in \V_\kappa$ trees $S$ and $T$ project to complementary 
sets of reals. Set $A$ is \markdef{universally Baire} if it is $\kappa$-universally Baire for all $\kappa$. 
In forcing extension we use these trees as a code for $A$ and identify set $A$ with the
projection of $T$.  We therefore define $A^{V[G]}$ to be $p[T]^{V[G]}$ 
for a generic filter $G\subseteq \bbP$. In \cite{Wo:Pmax2} set $A^{V[G]}$ is denoted $A_G$. 
Good sources for universally Baire sets is \cite{Lar:Stationary} and 
\cite[\S10]{Wo:Pmax2}. 

Given a $\kappa$-universally Baire set $A$ 
we shall need a sufficiently closed with respect to $A$  countable transitive model 
of a large enough fragment of ZFC. 

\begin{lem} \label{L.uB} 
Assume $A$ is $\kappa$-universally Baire and  $\theta>|\V_\kappa|$.  
Expand the language of ZFC by adding a predicate for $A$. 
Then for club many $M\prec H(\theta)$  the transitive collapse $\bar M$ of 
$M$ satisfies the following. 
\begin{enumerate}
\item \label{I.uB.1} $\bar M$ is \markdef{$A$-correct}: $A^{\bar M}=\bbR^{\bar M}\cap A$,
 and
\item \label{I.uB.2} $\bar M$ is \markdef{$A$-absolute}:  
If $\bbP$ is a forcing notion in $(\V_{\bar \kappa})^{\bar M}$ and $G\subseteq \bbP$ 
is $\bar M$-generic, then $A^{\bar M[G]}=\bbR^{\bar M[G]}\cap A$.
\pushcounter
\end{enumerate}
\end{lem} 

\begin{proof} 
Let  $M$  be an elementary submodel of a large enough $H(\theta)$ containing trees $S$ and $T$
that witness $\kappa$-universal Baireness of $A$  and let $\bar M$ be its transitive closure. 
We also denote images of $S,T$, and $\kappa$ under the transitive collapse 
by $\bar S$, $\bar T$, and $\bar\kappa$.  Let~$A^{\bar M}$ denote the projection of $\bar S$ in $\bar M$.  
In $\bar M$ interpret $A$ to be the projection of $\bar T$. 
Since the reals are not moved by the collapsing map, the 
 elementarily of $M$ implies \eqref{I.uB.1} and \eqref{I.uB.2}. 
 \end{proof}  
 
Model $\bar M$ in the conclusion of Lemma~\ref{L.uB} is \markdef{$A$-closed}  (see \S\ref{S.Oinfty}). 

If $A$ is a set of reals and there is a nontrivial elementary embedding of $\LL(A,\bbR)$ into itself
then $(A,\bbR)^\#$ is the theory of the first $\omega$ many indiscernibles for $\LL(A,\bbR)$
with parameters from $\bbR$ and predicate for $A$.  It is naturally identified with a set of reals. 

Theorem~\ref{T.Absolute} is due to Woodin. Its variant for projective sets 
was proved in \cite{Wo:Consistency}. A very similar result also appears in \cite[Theorem~1.13]{woodin83some}
(also compare \cite[Theorem~1.14]{woodin83some} with Corollary~\ref{C.Cohen}  below)
and a variant of part (2) for $\LR$ and proper forcing was proved in \cite{NeZa:Proper}. 

Let us remind the reader that, while large cardinals imply that for a pointclass $\Gamma$ all sets of reals in $\LL(\Gamma,\bbR)$ are universally Baire, these sets are not universally Baire in $\LL(\Gamma,\bbR)$.  A forcing notion $\bbP$ is \markdef{weakly proper} if every countable set of ordinals
in the extension is included in a ground-model countable set. In particular every proper forcing is weakly proper. 

\begin{thm} \label{T.Absolute}
Suppose $A$ is a set of reals such that all sets in $\LL(A,\bbR)$ are $\kappa$-universally Baire 
and $(A,\bbR)^\#$ exists. 
\begin{enumerate} 
\item Then for every forcing notion $\bbP\in \V_\kappa$ and $V$-generic 
flter $G\subseteq \bbP$ there is a generic elementary 
embedding  $j_G\colon \LL(A,\bbR)^{\V}\to \LL(A,\bbR)^{\V[G]}$. 
\item If in addition $\AD$ and $\DC$ hold in $\LL(A,\bbR)$ and $\bbP$ is weakly proper
 then the canonical $j_G$ fixes all ordinals. 
\end{enumerate}
\end{thm}

\begin{proof} (1) This is essentially \cite[Theorem~2.30]{Wo:Pmax2}, where
analogous statement was proved under the assumption that all sets in $\LL(A,\bbR)$
were weakly homogeneously Suslin. In this case the assumption that $(A,\bbR)^\#$ exists
is automatic. The present theorem is proved using 
appropriately modified versions of \cite[Lemma 2.27--2.29]{Wo:Pmax2}, but we shall sketch a proof for the convenience of a reader.  
If $B\subseteq \bbR$ is in $\LL(A,\bbR)$ then we have trees $T$ and $S$ such that $p[T]=B$
and $p[S]=\bbR\setminus B$ and $p[T]$ and $p[S]$ cover $\bbR$ in forcing extension by $\bbP$. 
We therefore let $j_G(B)=p[T]^{\V[G]}$. Note that 
 $(A,\bbR)^\#$ is a countable union of sets in $\LL(A,\bbR)$ (the $n$th set codes the theory of the first
 $n$ indiscernibles), and is therefore universally Baire. 
 If $U$ and $V$ are trees witnessing universal Baireness of $(A,\bbR)^\#$ then 
 an application of Lemma~\ref{L.uB} easily shows that $p[U]^{\V[G]}$ 
 still codes the first order diagram of 
 \[
 (H(\aleph_1)^{\V[G]},\in, j_G(C))
 \]
  where $C$ ranges over universally Baire sets in $\LL(A,\bbR)$. 
  Therefore map $j_G\colon (H(\aleph_1), \in, A)^{\V}\to (H(\aleph_1),\in,A)^{\V[G]}$ 
  is elementary and so is its canonical extension $\bar j_G\colon \LL(A,\bbR)^{\V}\to \LL(A,\bbR)^{\V[G]}$.

(2) Let $j_G$ be as defined in the proof of (1). 
The fact that $j_G$ does not move ordinals is 
 essentially \cite[Theorem~10.63]{Wo:Pmax2}. Proof that $j_G$ does not 
move ordinals relies on a result of Steel (\cite[Theorem~3.40]{Wo:Pmax2}). 
 \end{proof}

An another way to prove variants of Theorem~\ref{T.Absolute} (1) in case when the generic 
filter of $\bbP$ is determined by a single real $x_G$ is to construe 
$\LL(A,\bbR)^{\V[G]}$ as a generic ultrapower of $\LL(A,\bbR)^{\V}$ associated with 
\[
\cU(x_G)=\set{B\in (\cP(\bbR)\cap \LL(A,\bbR))^{\V}}{ x_G \in B}. 
\]
By using universal Baireness of sets in $\LL(A,\bbR)$, $\cU(x_G)$ is a generic  ultrafilter in 
$(\cP(\bbR)\cap \LL(A,\bbR))^{\V}$. In general case one can construe $\LL(A,\bbR)^{\V[G]}$ as
a direct limit of ultrapowers of this sort (see \cite[p. 752]{Wo:Pmax2}). 

By a result of Woodin (\cite{Wo:Pmax2}) there exists a semiproper forcing $\bbP$ such that 
the generic embedding as in Theorem~\ref{T.Absolute} necessarily moves ordinals.

\begin{cor} \label{C.Cohen} 
Assume $A$ satisfies the assumption of Theorem~\ref{T.Absolute} (2) 
for an uncountable cardinal $\kappa$. 
In addition assume $\bbP$ is a forcing notion of cardinality~$\aleph_1$ such 
that every real added by $\bbP$ is added by a countable regular subordering.  
 Then the identity map is an elementary 
embedding of $\LL(\Gamma,\bbR)^{\V}$ into an inner model $N$ that contains all reals. 
\end{cor}

\begin{proof} If $\kappa>\aleph_1$ and $\bbP$ is weakly proper
then this is an immediate consequence of 
Theorem~\ref{T.Absolute}, but it is exactly the case when $\kappa=\aleph_1$ that we shall need. 
By recursion we can extract a regular subordering $\bbQ$ of $\bbP$ 
that is an increasing $\omega_1$-chain of countable regular suborderings 
and such that the quotient $\bbP/\bbQ$ does not add any new reals.
We can write $\bbQ$ as a direct limit of regular suborderings $\bbQ_\xi$, for $\xi<\omega_1$, 
each of which is forcing-equivalent to $\bbC$.

By iterating  Theorem~\ref{T.Absolute} 
 $\omega_1$ times we can conclude that 
$\LL(A,\bbR)^{\V}$ is an elementary 
submodel of $N=\LL(A,\bbR)^{V^{\bbQ}}$. After forcing by quotient $\bbP/\bbQ$ 
no reals are added and therefore $\LL(A,\bbR)^{\V^\bbP}=\LL(A,\bbR)^{\V^{\bbQ}}$. 
This model contains all reals of the extension and therefore satisfies the conclusion of the corollary. 
\end{proof} 


In the following lemma we assume $\bbP$ is a forcing notion with the following 
properties: 
\begin{enumerate}
\item $|\bbP|=\delta$, 
\item $\bbP$ collapses $\delta$ to $\aleph_1$, 
\item \label{I.P.3} 
for every $\bbP$-name $\dot x$ for a real there exists a regular subordering $\bbQ$ of $ \bbP$ such that 
$\dot x$ is a $\bbQ$-name and $|\bbQ|<\delta$, 
\item $\bbP\times \bbP$ has $\delta$-chain condition. 
\pushcounter
\end{enumerate}

\begin{lem} \label{L.product} 
Assume $\bbP$ is as above and $\dot B$ is a $\bbP$-name for a set of reals such that 
$\bbP$ forces $\AD\axplus\DC$ hold in $\LL(\dot B, \bbR)^{V^{\bbP}}$, all sets in $\LL(\dot B,\bbR)$ 
are $\delta$-universally Baire, and $(\dot B,\bbR)^{\#}$  exists. 
Then $\bbP\times \bbP$ forces that there is an elementary embedding which 
fixes all reals and all ordinals from $\LL(\dot B, \bbR)^{V^{\bbP}}$ into 
an inner model of~$V^{\bbP\times\bbP}$ that contains all reals. 
\end{lem}

\begin{proof} Since every real added by $\bbP$ is added by a small forcing
and since~$\bbP$ collapses its own cardinality to $\aleph_1$, this is an immediate 
consequence of Corollary~\ref{C.Cohen}. 
\end{proof} 

\subsection{An iterable model with Woodin limit of Woodin cardinals} \label{S.WlimW} 
As the final preparation for  the proof of Theorem~\ref{T.Div} we outline some properties of 
Neeman's mouse. 
Assume $W=\LL[\vec E]$ is an inner  model with cardinal~$\delta$ which is 
a Woodin limit of Woodin cardinals
as constructed in \cite{Nee:Inner}. Then~$W$ has the following properties. 
\begin{enumerate}
\popcounter
\item\label{I.SS.Div.1} 
 $W$ is an extender model, $W=\LL[\vec E]$, where $\vec E$ witnesses Woodinness of both $\delta$
and cofinally many cardinals below $\delta$. 
\item \label{I.SS.Div.2} For every $\delta$-universally Baire set of reals $A$ in $W$  the model $\LL(A, \bbR)$ satisfies $\AD^+$. 
\item \label{I.SS.Div.4} The transitive collapse of every countable $X\prec (\V_{\delta+1})^W$ is fully iterable with 
 a $\delta$-universally Baire iteration strategy. 
 \pushcounter
 \end{enumerate}
 Clause \eqref{I.SS.Div.1} is a result of  \cite{mitchell94iteration}. Clause \eqref{I.SS.Div.2} is a result 
 of Neeman \cite{neeman02optimal}. 
Clause~\eqref{I.SS.Div.4} can be proved by using methods from  
  \cite{Ste:DerivedModels} where similar statements were proved. 

We shall need conditions \eqref{I.SS.Div.3} and \eqref{I.SS.Div.5} given below, while  
\eqref{I.SS.Div.6} and \eqref{I.SS.Div.7} will not be used outside of the present subsection. 
Let us call a countable model of the form $\LL[\vec E]$ that contains
a Woodin limit of Woodins \markdef{premouse}.  
Every premouse $M$ has the canonical 
relative constructibility well-ordering~$<_M$ of the reals. 
 We shall refer to an 
iterable premouse as a \markdef{mouse}.

 Recall that $\dGuB$ denotes the class of all $\delta$-universally Baire sets of reals. 
A sentence $\psi$ is $\bfSigma^2_1(\dGuB)$
if there is $A\in \dGuB$ and formula $\phi(X,Y)$, where $X$ and $Y$ are
second-order variables, such that $\psi$ is of the following form: 
\[
(\exists X\in \dGuB)(H(\aleph_1), \in, A, X)\models \phi. 
\]
Under our assumption that $\delta$ is a limit of Woodin cardinals $<\delta$-homogene\-ously Suslin 
is equivalent to $\delta$-universally Baire and this pointclass 
coincides with $\Sigma^2_1(\Hom_{<\delta})$ 
considered in \S\ref{S.many-one}. 

\begin{enumerate}
\popcounter
\item \label{I.SS.Div.6} If $M$ and $N$ are $(\omega_1+1)$-iterable mice then either 
$\bbR^M\subseteq \bbR^N$ and~$<_M$ is an initial segment of $<_N$ 
or $\bbR^N\subseteq \bbR^M$ 
and  $<_N$ is an initial segment of~$<_M$.  
\item \label{I.SS.Div.7} The set of $(\omega_1+1)$-iterable mice is $\Sigma^2_1(\dGuB)$. 
 \pushcounter
 \end{enumerate}
Given $M$ and $N$ as in \eqref{I.SS.Div.6}, a standard comparison argument (that can be reconstructed
from the proof of Theorem~\ref{T1}\footnote{Actually the proof of Theorem~\ref{T1} was based on the proof of Comparison Lemma}) shows that $M$ and $N$ have  countable iterations
$j\colon M\to M^*$ and $k\colon N\to N^*$ such that $M^*$ is an initial segment of $N^*$ or 
$N^*$ is an initial segment of $M^*$. In either case, the well-orderings $<_{M^*}$ and $<_{N^*}$ 
coincide. Since the reals of a mouse and their well-ordering are unaffected by iteration, 
\eqref{I.SS.Div.6} follows. 

Now we prove \eqref{I.SS.Div.7}. A set $\Sigma$ is an $\omega_1$-iteration 
strategy if and only if for all countable iteration trees $T$ obtained when \playerII\ obeys $\Sigma$
all branches chosen by $\Sigma$ result in well-founded models. Therefore the assertion 
that~$\Sigma$ is an $\omega_1$-iteration strategy is projective in $\Sigma$. 
Therefore stating that a countable transitive model $M$ is 
of the form $\LL[\vec E]$ and is $\omega_1$-iterable 
with a $\delta$-universally Baire iteration strategy is $\Sigma^2_1(\dGuB)$.

By Lemma~\ref{L.uB.strategy}, every such model is $\omega_1+1$-iterable, and this completes 
the proof of \eqref{I.SS.Div.7}. 
 
 \begin{enumerate}
 \popcounter
\item \label{I.SS.Div.3} There is a $\Sigma^2_1(\dGuB)$ good well-ordering of the reals in $W$. 
 \item\label{I.SS.Div.5} 
  For every real $r$ there is a $\delta$-universally Baire set $A$ such that $r$ is ordinal definable in $\LL(A,\bbR)$. 
 \pushcounter
 \end{enumerate}
 By \eqref{I.SS.Div.4}  every real belongs to an $\omega_1+1$-iterable mouse with a 
 $\delta$-universally Baire strategy. Since canonical well-orderings of the reals in 
 such mice cohere by \eqref{I.SS.Div.6}, we can define well-ordering 
 by letting $x<y$ if there exists an $\omega_1+1$-iterable mouse $M$ with a $\delta$-universally Baire iteration strategy such that $x<_M y$. By \eqref{I.SS.Div.7} 
 this is a $\Sigma^2_1(\dGuB)$ statement.

  We finally prove \eqref{I.SS.Div.5}. 
  Let   $M$ be an $\omega_1+1$-iterable mouse containing real~$x$ and let 
  $\Sigma$ be its $\delta$-universally Baire  strategy. Let $\alpha$ be such that~$x$ is 
  the $\alpha$-th real in $<_M$. By \eqref{I.SS.Div.6}, $x$ is the $\alpha$-th 
  real in every mouse $N$ such that $x\in N$ and $\Sigma$ is 
  an $\omega_1+1$-iteration   strategy for $N$.  
  We want to show that  $x$ is ordinal-definable 
  from $\alpha$ in $\LL(\Sigma\rs\omega_1, \bbR)$. Since $\Sigma\rs\omega_1$ need not be $\delta$-universally Baire in $\LL(\Sigma\rs\omega_1,\bbR)$, Lemma~\ref{L.uB.strategy} does not apply. 
  Nevertheless, $\Sigma\rs\omega_1$ can be extended to an $\omega_1+1$-iteration strategy 
  as follows. Since $\LL(\Sigma\rs\omega_1, \bbR)$  is a model of $\AD$,   $\aleph_1$ 
  is a measurable cardinal in it. In particular it has the tree property and after an iteration game 
  of length $\omega_1$ \playerII\ can choose an $\omega_1$ branch of the iteration tree. As a direct limit of 
  well-founded models of length $\omega_1$, the model corresponding to this branch is well-founded.

\subsection{Proof of Theorem~\ref{T.Div}} \label{S.S.Divergent} 
Assume that $V$ is Neeman's model described in \S\ref{S.WlimW} in which  $\delta$ is a Woodin
 limit of  Woodin cardinals and 
 continue numbering of formulas started in~\S\ref{S.WlimW}. 
  Let $M$ be the transitive collapse of an elementary submodel $X$ of $H(\lambda)$ for a large enough $\lambda>\kappa$. 
  An initial segment of $M$ is an initial segment of  $X\cap \V_{\kappa+1}$  and it therefore 
  satisfies \eqref{I.SS.Div.4}. This iteration strategy clearly gives an iteration strategy for~$M$.  
 Pick a Cohen real $c$ over $M$. Now fix a $\delta$-universally Baire set $A$ as in~\eqref{I.SS.Div.5} 
 so that $c$ is ordinal definable in $\LL(A,\bbR)$. Since $W$ satisfies  $\CH$, both $A$ and $A^\#$ can 
 be identified with subsets of $\omega_1$.

 Let  $\bbR\rs\alpha$
 be the first $\alpha$ reals in the 
 well-ordering of the reals as in \eqref{I.SS.Div.3} and write $B\rs \alpha=B\cap (\bbR\rs\alpha)$ for 
 $B\subseteq\bbR$. 
 By applying Theorem~\ref{T.WlimitW} to a $C\subseteq \omega_1$ that codes $A,A^\#$ and 
 trees witnessing $\aleph_1$-universal Baireness of all sets in $\LL(A,\bbR)$, we can find an iteration 
 $j\colon M\to M^*$ such that  with $\alpha=j(\delta)$ the following hold: 
 \begin{enumerate}
 \popcounter
 \item\label{I.c.0}   $\LL(C\rs \alpha, \bbR\rs \alpha)\prec \LL(C,\bbR)$, 
 \item \label{II.WlimW.2} the pair 
 $(C\rs\alpha, \bbR\rs\alpha)$ is $j(\cWdd)$-generic over $M^*$, 
 \item  every real in the extension is added by a small forcing, 
 \item \label{I.allreals} the reals of the forcing extension from \eqref{II.WlimW.2} 
 are equal to $\bbR\rs \alpha$.
 \pushcounter 
 \end{enumerate}
 We also assure that $\alpha$ is large enough to have $c\in \bbR\rs\alpha$.  
 Then for generic $G\subseteq j(\cWdd)$ as in \eqref{II.WlimW.2} we have that
 in $M^*[G]$  real $c$ is ordinal definable over a model 
 of the form  $\LL(B,\bbR)$ for some $\aleph_1$-universally Baire
 set $B$, and this model contains all reals of the extension by \eqref{I.allreals}. This model  is a model of $\AD^+$ as witnessed by $A^\#$. 
 Note that, since $M$ and $M^*$ have the same reals,~$c$ is Cohen over~$M^*$. 
 
 By elementarity between $M$ and $M^*$ we can fix a condition $p$ in $(\cWdd)^M$ which forces
 that there exist a real $c$, an $\aleph_1$-universally Baire set $B$, 
 and a countable ordinal~$\alpha$ such that 
 \begin{enumerate}
 \popcounter
 \item \label{I.c.1} $c$ is the $\alpha$-th real in the well-ordering of ordinal definable reals in $\LL(B,\bbR)$, 
\item \label{I.c.1.5} $\LL(B,\bbR)\models \AD^+$, 
 \item  every new real belongs to a forcing extension by a forcing of cardinality $<\delta$, 
 \item  \label{I.c.3} $c$ does not belong to any dense $G_\delta$ set  coded in $M$, 
  \end{enumerate}
Let  $G_1\times G_2$ be a  $\cWdd\times \cWdd$-generic filter over $M$ 
below $(p,p)$. 
We therefore have an $\aleph_1$-universally Baire set $B_1$ and real $c_1$ in $M[G_1]$ such that 
in $\LL(\bbR,B_1)^{M[G_1]}$ conditions \eqref{I.c.1}--\eqref{I.c.3} hold. 
We also  have a universally Baire set~$B_2$ and real $c_2$ in $M[G_2]$ such that 
\eqref{I.c.1}--\eqref{I.c.3} hold with the same~$\alpha$. 

By Hjorth's Lemma~\ref{L4+}  product  $\cWdd\times\cWdd$ has $\delta$-cc and 
$\cWdd$ satisfies the assumptions of Lemma~\ref{L.product}. 
By applying Lemma~\ref{L.product} twice, we find elementary embeddings
\[
j_1\colon \LL(\bbR,B_1)^{M[G_1]}\to \LL(\bbR,B_1')^{M[G_1][G_2]}
\]
and 
\[
j_2\colon \LL(\bbR,B_2)^{M[G_1]}\to \LL(\bbR,B_2')^{M[G_2][G_1]}.
\] 
Both $j_1$ and $j_2$ fix all reals and all ordinals. 

Now assume that in $M[G_1][G_2]$ there are no divergent models of $\AD^+$. 
This applies to images of  $j_1$  and $j_2$. Therefore one of these models includes
the other, and in this model both $c_1$ and $c_2$ are the $\alpha$-th ordinal definable 
real in the well-ordering of ordinal-definable reals. 
We therefore must have that $c_1=c_2$. This   
implies $c_1\in M$, contradicting the fact that $c_1$  
is a Cohen real over $M$.

\section{Other applications} 
\label{S.Sigma22}
A positive answer to the following  (modulo sufficient large cardinals) 
was conjectured by John Steel (see also \cite{Wo:Beyond}). 

\begin{quest} \label{Q.Sigma-2-2}
Assume $\varphi$ is a $\Sigma^2_2$ sentence such that $\CH\axplus\varphi$ 
holds in some forcing extension. Is it true that $\varphi$ holds in every forcing extension 
that satisfies $\lozenge$? 
\end{quest}

Several partial positive results were proved in \cite{DoSch:Extender} 
 and in \cite{FaKeLaMa:Absoluteness}.  
By a result of Woodin (see \cite{KeLaZa:Regular}), if there is a measurable 
Woodin cardinal then there is a forcing $\bbP$  that simultaneously 
forces every $\Sigma^2_2$ sentence~$\varphi$ that 
holds in some forcing extension satisfying $\CH$.
The forcing $\bbP$ is the iteration of the collapse of $2^{\aleph_0}$ to $\aleph_1$ 
and an another forcing notion. 
It is not known whether the collapse of $2^{\aleph_0}$ to $\aleph_1$ 
alone suffices for this conclusion. 

The extender algebra has many other applications but it is time to finish (\compare{} \cite[p. 155]{Mil:Descriptive}). 
For example, in \cite{Lar:Three} a proof that the $\Omega$-conjecture is true in many  
inner models (for example, 
Neeman's model briefly described in \S\ref{S.WlimW}) was presented. One of the key points in the proof 
is Lemma~\ref{L.Omega} below. 
If~$A$ is a universally Baire set of reals and $M$ is a transitive model of a large enough fragment of ZFC, 
we say that $M$ is \markdef{$A$-closed} if for every generic extension M$[G]$ of $M$ we have that 
$A\cap M[G]\in M[G]$. This condition is closely related to the conclusion of Lemma~\ref{L.uB}, 
the only difference being that in the present situation 
 the language of ZFC is not expanded by adding a predicate for $A$. 
 Note that the  assumption of the following lemma 
 is a consequence of clause \eqref{I.SS.Div.4} in  \S\ref{S.WlimW}.

\begin{lem} \label{L.Omega} Assume that $\delta$ is a Woodin cardinal
and $\kappa>\delta$ is an inaccessible cardinal such that the transitive collapse of 
 every elementary submodel $H\prec \V_\kappa$ is fully iterable with a universally Baire iteration strategy. 
Then for every  universally Baire set of reals $A$ there is a fully iterable $A$-closed 
transitive model. \qed
\end{lem} 

The proof of Lemma~\ref{L.Omega}  is identical to the proof of Lemma~\ref{L.uB}. 
Its conclusion,  together with an application of Theorem~\ref{T1},  implies $\Omega$-conjecture 
(see \cite[Theorem~9.2]{Lar:Three}). 

\nocite{cabal83seminar}
\nocite{handbook06setthy}

\bibliographystyle{cabal}
\bibliography{ifmain,cab-strings,cabal,thisvolume,cab-crossrefs}
\bibtexhack

\end{document}